\sloppy\allowdisplaybreaks[4]
  \def\cA{{\cal A}}  
  \def\cB{{\cal B}}  
 \def\sC{\mathscr{C}} \def\cC{{\cal C}}  
  \def\cD{{\cal D}}  
\def\dbE{\mathbb{E}}    
\def\dbF{\mathbb{F}}  \def\cF{{\cal F}}  
  \def\cG{{\cal G}}  
\def\dbH{\mathbb{H}}    
  \def\cI{{\cal I}}  
 \def\sJ{\mathscr{J}} \def\cJ{{\cal J}}  
  \def\cK{{\cal K}}  
  \def\cL{{\cal L}}
\def\dbP{\mathbb{P}}  \def\cP{{\cal P}}  
  \def\cQ{{\cal Q}}  
\def\dbR{\mathbb{R}} \def\sR{\mathscr{R}} \def\cR{{\cal R}}  
\def\dbS{\mathbb{S}} \def\sS{\mathscr{S}} \def\cS{{\cal S}}  
 \def\sU{\mathscr{U}} \def\cU{{\cal U}}  
 \def\sV{\mathscr{V}} \def\cV{{\cal V}}  
  \def\cX{{\cal X}}
\def\ss{\smallskip}   \def\lt{\left}        \def\hb{\hbox}
\def\ms{\medskip}     \def\rt{\right}       \def\ae{\text{a.e.}}
      \def\lan{\langle}     
\def\q{\quad}         \def\ran{\rangle}     \def\tr{\hb{tr$\,$}}
\def\qq{\qquad}           
\def\no{\noindent}        
\def\hp{\hphantom}    \def\blan{\big\lan}   
\def\nn{\nonumber}    \def\bran{\big\ran}   
\def\rf{\eqref}       \def\Blan{\Big\lan\!} 
\def\cd{\cdot}        \def\Bran{\!\Big\ran} 
\def\deq{\triangleq}  \def\({\Big(}         
\def\les{\leqslant}   \def\){\Big)}         
\def\ges{\geqslant}   \def\[{\Big[}         \def\ts{\textstyle}
\def\ti{\tilde}       \def\]{\Big]}         \def\im{\hb{im\,}}
   \def\Lan{[\![}        
\def\Ra{\Rightarrow}  \def\Ran{]\!]}        
\def\a{\alpha}        \def\Om{\Omega}  
\def\b{\beta}   \def\D{\Delta}   \def\d{\delta}   \def\F{\Phi}     
\def\z{\zeta}       \def\Si{\Sigma}  
\def\f{\varphi}   \def\l{\lambda}  \def\m{\mu}      \def\e{\varepsilon}
    \def\i{\infty}      
\newtheoremstyle{thry}
{}      
{}      
{\sl}   
{}      
{\bf}   
{.}     
{.5em}  
{}      
\theoremstyle{thry}
\newtheorem{theorem}{Theorem}[section]
\newtheorem{proposition}[theorem]{Proposition}
\newtheorem{corollary}[theorem]{Corollary}
\newtheorem{lemma}[theorem]{Lemma}
\theoremstyle{definition}
\theoremstyle{remark}
\newtheorem{remark}[theorem]{Remark}
\def\punct{}
\newtheoremstyle{dotless}{}{}{\rm}{}{\bf}{\punct}{.5em}{}
\theoremstyle{dotless}
\newenvironment{taggedthm}[1]
 {\taggedthmx}
 {\endtaggedthmx}
\newenvironment{taggedassumption}[1]
 {\taggedassumptionx}
 {\endtaggedassumptionx}
   \newcommand{\setword}[2]{%
   \phantomsection
   #1\def\@currentlabel{\unexpanded{#1}}\label{#2}%
   }
\begin{document}

\title{\bf Indefinite Backward Stochastic Linear-Quadratic Optimal Control Problems}
\author{Jingrui Sun\thanks{Department of Mathematics, Southern University of Science and Technology, Shenzhen,
                           518055, China (Email: {\tt sunjr@sustech.edu.cn}).
                           This author is supported by NSFC Grant 11901280, Guangdong Basic and Applied Basic Research Foundation 2021A1515010031,
                           and SUSTech start-up funds Y01286128 and Y01286228. }
\and
Zhen Wu\thanks{Corresponding Author. School of Mathematics, Shandong University, Jinan 250100, China (Email: {\tt wuzhen@sdu.edu.cn}).
               This author is supported by NSFC Grants 11831010, 61961160732 and Shandong Provincial Natural Science
               Foundation ZR2019ZD42.}
\and
Jie Xiong\thanks{Department of Mathematics and SUSTech International center for Mathematics, Southern University
                 of Science and Technology, Shenzhen, 518055, China (Email: {\tt xiongj@sustech.edu.cn}).
                 This author is supported by NSFC Grants 61873325 and 11831010, and SUSTech start-up funds Y01286120 and Y01286220.}}

\maketitle

\no{\bf Abstract.}
This paper is concerned with a backward stochastic linear-quadratic (LQ, for short) optimal control problem
with deterministic coefficients.
The weighting matrices are allowed to be indefinite, and cross-product terms in the control and state processes
are present in the cost functional.
Based on a Hilbert space method, necessary and sufficient conditions are derived for the solvability of the problem,
and a general approach for constructing optimal controls is developed.
The crucial step in this construction is to establish the solvability of a Riccati-type equation,
which is accomplished under a fairly weak condition by investigating the connection with forward stochastic LQ
optimal control problems.

\ms
\no{\bf Key words.}
indefinite, backward stochastic differential equation, linear-quadratic, optimal control, Riccati equation.

\ms
\no{\bf AMS 2020 Mathematics Subject Classification.} 93E20, 49N10, 49N35, 49K27.

\section{Introduction}\label{Sec:Intro}

Let $(\Om,\cF,\dbP)$ be a complete probability space on which a standard one-dimensional
Brownian motion $W=\{W(t);t\ges0\}$ is defined, and let $\dbF=\{\cF_t\}_{t\ges0}$ be the
usual augmentation of the natural filtration generated by $W$.
For a random variable $\xi$, we write $\xi\in\cF_t$ if $\xi$ is $\cF_t$-measurable;
and for a stochastic process $\f$, we write $\f\in\dbF$ if it is progressively measurable with
respect to the filtration $\dbF$.

\ms

Consider the following controlled linear backward stochastic differential equation
(BSDE, for short) over a finite horizon $[0,T]$:
\begin{equation}\label{state}\left\{\begin{aligned}
dY(t) &= [A(t)Y(t)+B(t)u(t)+C(t)Z(t)]dt + Z(t)dW(t), \\
 Y(T) &= \xi,
\end{aligned}\right.\end{equation}
where the {\it coefficients} $A,C:[0,T]\to\dbR^{n\times n}$ and $B:[0,T]\to\dbR^{n\times m}$ of the
{\it state equation} \rf{state} are given bounded deterministic functions;
the {\it terminal value} $\xi$ is in $L^2_{\cF_T}(\Om;\dbR^n)$, the space of $\dbR^n$-valued, $\cF_T$-measurable,
square-integrable random variables;
and $u$, valued in $\dbR^m$, is the {\it control process}. The class of {\it admissible controls} for \rf{state} is
$$\ts \sU=\Big\{ u:[0,T]\times\Om\to\dbR^m \bigm| u\in\dbF~\hb{and}~\dbE\int_0^T|u(t)|^2dt<\i \Big\}, $$
and the associated {\it cost} is given by the following quadratic functional:
\begin{align}\label{cost}
J(\xi;u) &= \dbE\bigg[\lan GY(0),Y(0)\ran + \!\int_0^T\!\Blan
          \begin{pmatrix}  Q(t) & \!\!S_1^\top(t) & \!\!S_2^\top(t)\\
                         S_1(t) & \!\!R_{11}(t)   & \!\!R_{12}(t)  \\
                         S_2(t) & \!\!R_{21}(t)   & \!\!R_{22}(t)  \end{pmatrix}\!\!
          \begin{pmatrix}Y(t) \\ Z(t) \\ u(t)\end{pmatrix}\!,\!
          \begin{pmatrix}Y(t) \\ Z(t) \\ u(t)\end{pmatrix}\Bran dt \bigg],
\end{align}
where the superscript $\top$ denotes the transpose of a matrix, $G$ is a symmetric $n\times n$ constant matrix, and
$$ Q, \q S = \begin{pmatrix}S_1 \\ S_2 \end{pmatrix},
\q R = \begin{pmatrix}R_{11} & R_{12} \\
                      R_{21} & R_{22} \end{pmatrix} $$
are bounded deterministic matrix-valued functions of proper dimensions over $[0,T]$ such that
the blocked matrix in the cost functional is symmetric.
The optimal control problem of interest in the paper can be stated as follows.

\begin{taggedthm}{Problem (BSLQ).}
For a given terminal state $\xi\in L^2_{\cF_T}(\Om;\dbR^n)$, find a control $u^*\in\sU$ such that
\begin{align}\label{def-u*}
J(\xi;u^*)=\inf_{u\in\sU}J(\xi;u) \equiv V(\xi).
\end{align}
\end{taggedthm}

The above problem is usually referred to as a {\it backward stochastic linear-quadratic (LQ, for short) optimal
control problem} (BSLQ problem, for short), due to the linearity of the backward state equation \rf{state} and
the quadratic form of the cost \rf{cost}.
A process $u^*\in\sU$ satisfying \rf{def-u*} is called an {\it optimal control} for the terminal state $\xi$;
the adapted solution $(Y^*,Z^*)$ of the state equation \rf{state} corresponding to $u=u^*$ is called an
{\it optimal state process};
and the function $V$ is called the {\it value function} of Problem (BSLQ).

\ms

The study of optimal control for BSDEs is important and appealing, not only at the theoretical level, but also
in financial applications.
It is well known that BSDEs play a central role in stochastic control theory and have had fruitful applications
in finance (see, e.g., \cite{Ma-Yong1999,Yong-Zhou1999,Pham2009,Peng2010,Zhang2017}).
An optimal control problem for backward differential equations arises naturally when we look at a two-person
zero-sum differential game from a leader-follower point of view (see, e.g., \cite{Yong2002} and \cite[Chapter 6]{Yong2015}).
In the case of stochastic differential games, the state equation involved in the backward optimal control
problem becomes a BSDE, and it turns out that we need to handle a backward stochastic optimal control problem.
On the other hand, in mathematical finance one frequently encounters financial investment problems with future
conditions specified, for example, the mean-variance problem for a given level of expected return.
Such kind of problems usually can be modeled as an optimal control problem with terminal constraints, and under
certain conditions, it can be converted into a backward stochastic optimal control problem;
see the recent work of Bi--Sun--Xiong \cite{Bi-Sun-Xiong2020}.

\ms

The LQ optimal control problem for BSDEs was initially investigated by Lim--Zhou \cite{Lim-Zhou2001},
where the coefficients are deterministic, {\it no cross terms} in $(Y,Z,u)$ appears in the quadratic
cost functional, and all the weighting matrices are {\it positive semidefinite}.
They obtained a complete solution for such a BSLQ problem, using a forward formulation and a limiting procedure,
together with the completion-of-squares technique.
Along this line, a couple of follow-up works appeared afterward.
For example, Wang--Wu--Xiong \cite{Wang-Wu-Xiong2012} studied a one-dimensional BSLQ problem under partial information;
Li--Sun--Xiong \cite{Li-Sun-Xiong2019} generalized the results of Lim--Zhou \cite{Lim-Zhou2001}
to the case of mean-field backward LQ optimal control problems;
Sun--Wang \cite{Sun-Wang2020} further carried out a thorough investigation on the backward stochastic LQ problem
with random coefficients;
Huang--Wang--Wu \cite{Huang-Wang-Wu2016} studied a backward mean-field linear-quadratic-Gaussian
game with full/partial information;
Wang--Xiao--Xiong \cite{Wang-Xiao-Xiong2018} analyzed a kind of LQ nonzero-sum differential game with
asymmetric information for BSDEs;
Du--Huang--Wu \cite{Du-Huang-Wu2018} considered a dynamic game of $N$ weakly-coupled
linear BSDE systems involving mean-field interactions;
and based on \cite{Lim-Zhou2001,Li-Sun-Xiong2019}, Bi--Sun--Xiong \cite{Bi-Sun-Xiong2020} developed
a theory of optimal control for controllable stochastic linear systems.
It is worthy to point out that the above mentioned works depend crucially on the positive/nonnegative
definiteness assumption imposed on the weighting matrices, and most of them do not allow cross terms
in $(Y,Z,u)$ in cost functionals.
However, one often encounters situations where the positive/nonnegative definiteness assumption is not
fulfilled. For example, let us look at the following maximin control problem (this kind of problems arises
in the study of zero-sum differential games):
$$ \max_{v\in L_\dbF^2(0,1;\dbR)}\min_{u\in L_\dbF^2(0,1;\dbR)}
   \dbE\bigg\{|X(1)|^2 + 2\xi X(1) + \int_0^1 \[|u(t)|^2-(a^2+1)|v(t)|^2\]dt\bigg\} $$
subject to
$$\left\{\begin{aligned}
dX(t) &= u(t)dt + [X(t)+v(t)]dW(t), \q t\in[0,1], \\
 X(0) &= 0,
\end{aligned}\right.$$
where $L_\dbF^2(0,1;\dbR)$ is the space of $\dbF$-progressively measurable processes $\f:[0,1]\times\Om\to\dbR$
with $\dbE\int_0^1|\f(t)|^2dt<\i$, $\xi$ is an $\cF_1$-measurable, bounded random variable, and $a>0$ is a constant.
For a given $v\in L_\dbF^2(0,1;\dbR)$, the minimization problem is a standard forward stochastic LQ optimal control problem,
to which applying the theory developed in \cite{Sun-Li-Yong2016} (see also \cite[Chapter 2]{Sun-Yong2020}),
we can easily obtain the minimum $V(\xi;v)$ (depending on $\xi$ and $v$):
$$ V(\xi;v) = \dbE\int_0^1 \[-|\eta(t)|^2+2\z(t)v(t)-a^2|v(t)|^2\]dt, $$
where $(\eta,\z)$ is the adapted solution to the BSDE
$$\left\{\begin{aligned}
d\eta(t) &= [\eta(t)-\z(t)-v(t)]dt + \z(t)dW(t), \q t\in[0,1], \\
 \eta(1) &= \xi.
\end{aligned}\right.$$
Using the transformations
$$ Y(t)=\eta(t), \q Z(t)=\z(t), \q u(t)= v(t)-{1\over a^2}\z(t), $$
we see the maximization problem is equivalent to the BSLQ problem with the state equation
$$\left\{\begin{aligned}
dY(t) &= \[Y(t)-{a^2+1\over a^2}Z(t)-u(t)\]dt + Z(t)dW(t), \q t\in[0,1], \\
 Y(1) &= \xi
\end{aligned}\right.$$
and the cost functional
$$ J(\xi;v) = \dbE\int_0^1 \[|Y(t)|^2-{1\over a^2}|Z(t)|^2 + a^2|u(t)|^2\]dt. $$
Clearly, the positive/nonnegative definiteness condition is not satisfied in this BSLQ problem since
the coefficient of $|Z(t)|^2$ is negative.

\ms

The {\it indefinite} (by which we mean the weighting matrices in the cost functional are not necessarily
positive semidefinite) backward stochastic LQ optimal control problem remains open.
It is of great practical importance and more difficult than the definite case mentioned previously.
The reason for this is that without the positive definiteness assumption one does not even know whether
the problem admits a solution.
On the other hand, even if optimal controls exist, it is by no means trivial to construct one
by using the forward formulation and limiting procedure developed in \cite{Lim-Zhou2001}, because
the forward formulation is also indefinite and it is not clear whether the solution to its associated
Riccati equation is invertible or not (and hence the limiting procedure cannot proceed).
Moreover, the presence of the cross terms in $(Y,Z,u)$ in the cost functional, especially the cross term
in $Y$ and $Z$, brings extra difficulty to the problem.
As we shall see in \autoref{sec:construction}, for the indefinite problem one cannot eliminate all cross
terms simultaneously by transformations.
We point out that taking cross terms into consideration is not just to make the framework more general,
but also to prepare for solving LQ differential games.
As mentioned earlier, a BSLQ problem arises when we look at the game in a leader-follower manner, whose
cost functional is exactly of the form introduced in this paper.

\ms

The purpose of this paper is to carry out a thorough study of the indefinite BSLQ problem.
We explore the abstract structure of Problem (BSLQ) from a Hilbert space point of view and derive
necessary and sufficient conditions for the existence of optimal controls.
The backward problem turns out to possess a similar structure as the forward stochastic LQ problem
(Problem (FSLQ), for short),
which suggests that the uniform convexity of the cost functional is the essential condition for
solving Problem (BSLQ).
We also establish a characterization of the optimal control by means of forward-backward stochastic
differential equations (FBSDEs, for short).
With this characterization the verification of the optimality of the control constructed in \autoref{sec:construction}
becomes straightforward and no longer needs the completion-of-squares technique.
The crucial step in constructing the optimal control is to establish the solvability of a Riccati-type equation.
We accomplish this by examining the connection between Problem (BSLQ) and Problem (FSLQ) and discovering some
nice properties of the solution to the Riccati equation associated with Problem (FSLQ).
It is worth pointing out that a transformation, which converts the original backward problem to an equivalent one
with $G=Q=0$, plays a central role in our analysis.
This transformation simplifies Problem (BSLQ), and more importantly, enables us to easily obtain the positivity
of the solution to the Riccati equation associated with Problem (FSLQ).
As we shall see, this positivity property is the key to employ the limiting procedure.

\ms

The remainder of this paper is structured as follows.
In \autoref{Sec:Pre} we give the preliminaries and collect some recently developed results on
forward stochastic LQ optimal control problems.
In \autoref{sec:Hilbert} we study Problem (BSLQ) from a Hilbert space point of view and derive
necessary and sufficient conditions for the existence of an optimal control.
By means of forward-backward stochastic differential equations, a characterization of the optimal
control is present in \autoref{sec:FBSDE}.
The connection between Problem (BSLQ) and Problem (FSLQ) is discussed in \autoref{sec:connection}.
In \autoref{sec:construction} we simplify Problem (BSLQ) and construct the optimal control in the case
that the cost functional is uniformly convex.
\autoref{sec:conclusion} concludes the paper.

\section{Preliminaries}\label{Sec:Pre}

We begin by introducing some notation. Let $\dbR^{n\times m}$ be the Euclidean space of $n\times m$ real matrices,
equipped with the Frobenius inner product
$$ \lan M,N\ran=\tr(M^\top N), \q M,N\in\dbR^{n\times m}, $$
where $\tr(M^\top N)$ is the trace of $M^\top N$.
The norm induced by the Frobenius inner product is denoted by $|\cd|$.
The identity matrix of size $n$ is denoted by $I_n$.
When no confusion arises, we often suppress the index $n$ and write $I$ instead of $I_n$.
Let $\dbS^n$ be the subspace of $\dbR^{n\times n}$ consisting of symmetric matrices.
For $\dbS^n$-valued functions $M$ and $N$, we write $M\ges N$ (respectively, $M>N$) if $M-N$ is positive
semidefinite (respectively, positive definite) almost everywhere (with respect to the Lebesgue measure),
and write $M\gg0$ if there exists a constant $\d>0$ such that $M\ges\d I_n$.
For a subset $\dbH$ of $\dbR^{n\times m}$, we denote by $C([0,T];\dbH)$ the space of continuous functions
from $[0,T]$ into $\dbH$, and by $L^\i(0,T;\dbH)$ the space of Lebesgue measurable, essentially bounded functions
from $[0,T]$ into $\dbH$.
Besides the space $L^2_{\cF_T}(\Om;\dbR^n)$ introduced previously, the following spaces of stochastic processes
will also be frequently used in the sequel:
\begin{align*}
L_\dbF^2(0,T;\dbH)
   &= \ts\Big\{\f:[0,T]\times\Om\to\dbH~|~\f\in\dbF~\hb{and}~\dbE\int^T_0|\f(t)|^2dt<\i \Big\}, \\
L_\dbF^2(\Om;C([0,T];\dbH))
   &= \Big\{\f:[0,T]\times\Om\to\dbH~|~\f~\hb{has continuous paths,}~\f\in\dbF,\\
   &\hp{=\Big\{\ } \ts\hb{and~}\dbE\[\sup_{0\les t\les T}|\f(t)|^2\]<\i\Big\}.
\end{align*}

\ss

We impose the following conditions on the coefficients of the state equation \rf{state}
and the weighting matrices of the cost functional \rf{cost}.

\begin{taggedassumption}{(A1)}\label{ass:A1}
The coefficients of the state equation \rf{state} satisfy
\begin{align*}
A\in L^\i(0,T;\dbR^{n\times n}), ~
B\in L^\i(0,T;\dbR^{n\times m}), ~
C\in L^\i(0,T;\dbR^{n\times n}).
\end{align*}
\end{taggedassumption}

\begin{taggedassumption}{(A2)}\label{ass:A2}
The weighting matrices in the cost functional \rf{cost} satisfy
\begin{align*}
G\in\dbS^n,~
Q\in L^\i(0,T;\dbS^{n}), ~
S\in L^\i(0,T;\dbR^{(n+m)\times n}), ~
R\in L^\i(0,T;\dbS^{n+m}).
\end{align*}
\end{taggedassumption}

For the well-posedness of the state equation \rf{state} we present the following lemma,
which is a direct consequence of the theory of linear BSDEs; see \cite[Chapter 7]{Yong-Zhou1999}.

\begin{lemma}\label{lmm:Sun2019}
Let \ref{ass:A1} hold. Then for any $(\xi,u)\in L^2_{\cF_T}(\Om;\dbR^n)\times\sU$,
the state equation \rf{state} admits a unique adapted solution
$$ (Y,Z)\in L_\dbF^2(\Om;C([0,T];\dbR^n))\times L_\dbF^2(0,T;\dbR^n). $$
Moreover, there exists a constant $K>0$, independent of $\xi$ and $u$, such that
\begin{align}\label{GUJI}
\dbE\lt[\sup_{0\les t\les T}|Y(t)|^2+\int_0^T|Z(t)|^2dt\rt]
\les K\dbE\lt[|\xi|^2+\int_0^T|u(t)|^2dt\rt].
\end{align}
\end{lemma}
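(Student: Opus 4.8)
The plan is to view \rf{state} as a linear backward stochastic differential equation with a Lipschitz generator, invoke the classical existence--uniqueness theory for such equations, and then derive \rf{GUJI} by a standard energy argument. Writing the driver as $f(t,y,z)=A(t)y+C(t)z+B(t)u(t)$, assumption \ref{ass:A1} guarantees that $f(\cd,y,z)$ is $\dbF$-progressively measurable for each fixed $(y,z)$, that $f$ is uniformly Lipschitz in $(y,z)$ with constant $L=\|A\|_\i\vee\|C\|_\i$, and that the inhomogeneous term lies in $L^2_\dbF(0,T;\dbR^n)$, since
$$ \dbE\int_0^T|B(t)u(t)|^2dt\les\|B\|_\i^2\,\dbE\int_0^T|u(t)|^2dt<\i \q\hb{for}~u\in\sU. $$
Together with $\xi\in L^2_{\cF_T}(\Om;\dbR^n)$, these are exactly the structural hypotheses under which a unique adapted solution in the stated spaces is known to exist.

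For existence and uniqueness I would run the standard fixed-point scheme. On the Hilbert space $L^2_\dbF(0,T;\dbR^n)\times L^2_\dbF(0,T;\dbR^n)$ equipped with the weighted norm $\dbE\int_0^T e^{\b t}(|y(t)|^2+|z(t)|^2)\,dt$, define a map $\F$ sending $(y,z)$ to the solution $(Y,Z)$ of the \emph{frozen} equation obtained by replacing $(Y,Z)$ inside $f$ by $(y,z)$; here $Y$ is produced by conditioning and $Z$ by the martingale representation theorem. Using the Lipschitz bound and It\^o's formula, one checks that $\F$ is a contraction once $\b$ is taken large enough relative to $L$, so that its unique fixed point is the desired adapted solution. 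The pathwise continuity of $Y$ is then read off from its representation as a conditional expectation plus a continuous stochastic integral.

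To obtain \rf{GUJI} I would apply It\^o's formula to $|Y(t)|^2$, integrate over $[t,T]$, and use $Y(T)=\xi$ to get
\begin{align*}
|Y(t)|^2+\int_t^T|Z(s)|^2ds
&= |\xi|^2-\int_t^T 2\lan Y(s),A(s)Y(s)+C(s)Z(s)+B(s)u(s)\ran ds \\
&\q -\int_t^T 2\lan Y(s),Z(s)\ran dW(s).
\end{align*}
Taking expectations removes the martingale term, and bounding the cross terms by Young's inequality---with the weights chosen so that the $|Z|^2$ contribution is absorbed on the left---yields a control of $\dbE|Y(t)|^2+\frac12\dbE\int_t^T|Z|^2ds$ by $\dbE|\xi|^2$, $\dbE\int_0^T|u|^2ds$, and $\dbE\int_t^T|Y|^2ds$. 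Gronwall's inequality, run backward in time, then bounds $\sup_{0\les t\les T}\dbE|Y(t)|^2$ and, feeding this back, $\dbE\int_0^T|Z|^2ds$ by the right-hand side of \rf{GUJI}.

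The one genuinely delicate point is upgrading $\sup_t\dbE|Y(t)|^2$ to $\dbE\sup_t|Y(t)|^2$, that is, moving the supremum inside the expectation. For this I would return to the identity above, take the supremum over $t$ \emph{before} taking expectations, and apply the Burkholder--Davis--Gundy inequality to the stochastic integral, estimating its running supremum by $C\,\dbE(\int_0^T|Y|^2|Z|^2ds)^{1/2}$. Splitting this via Young's inequality as $\frac12\dbE\sup_t|Y(t)|^2+C'\dbE\int_0^T|Z|^2ds$ lets the supremum term be absorbed on the left, while the already-established bound on $\dbE\int_0^T|Z|^2ds$ closes the estimate. The independence of $K$ from $\xi$ and $u$ is automatic, since every constant produced along the way depends only on $T$ and the essential-supremum norms of $A,B,C$.
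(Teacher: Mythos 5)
Your proposal is correct: the weighted-norm fixed-point construction, the It\^o--Young--Gronwall a priori bound, and the Burkholder--Davis--Gundy upgrade from $\sup_t\dbE|Y(t)|^2$ to $\dbE\big[\sup_t|Y(t)|^2\big]$ together constitute the standard well-posedness proof for Lipschitz BSDEs, and all the structural hypotheses are verified exactly as needed under \ref{ass:A1}. The paper itself gives no proof---it states the lemma as a direct consequence of linear BSDE theory, citing Yong--Zhou, Chapter 7---and your argument is precisely the classical one underlying that citation, so the two approaches coincide in substance.
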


We next collect some results from forward stochastic LQ optimal control theory.
These results will be needed in \autoref{sec:connection} and \autoref{sec:construction}.
Consider the forward linear stochastic differential equation
\begin{equation}\label{state:SLQ}\left\{\begin{aligned}
d\cX(t) &= [\cA(t)\cX(t)+\cB(t)v(t)]dt + [\cC(t)\cX(t)+\cD(t)v(t)]dW(t),\q t\in[0,T], \\
 \cX(0) &= x,
\end{aligned}\right.\end{equation}
and the cost functional
\begin{align}\label{cost:SLQ}
\cJ(x;v) =\dbE\Bigg[\lan\cG\cX(T),\cX(T)\ran
+\int_0^T\Blan\!\begin{pmatrix}\cQ(t) & \!\cS^\top(t) \\
                               \cS(t) & \!\cR(t)      \end{pmatrix}\!
                \begin{pmatrix}\cX(t) \\ v(t) \end{pmatrix}\!,
                \begin{pmatrix}\cX(t) \\ v(t) \end{pmatrix}\! \Bran dt \Bigg],
\end{align}
where in \rf{state:SLQ} and \rf{cost:SLQ},
\begin{eqnarray*}
& \cA,\cC \in L^\i(0,T;\dbR^{n\times n}), \q \cB,\cD \in L^\i(0,T;\dbR^{n\times m}), \\
& \cG\in\dbS^n, \q \cQ\in L^\i(0,T;\dbS^n), \q \cS\in L^\i(0,T;\dbR^{m\times n}), \q\cR\in L^\i(0,T;\dbS^m).
\end{eqnarray*}
The forward stochastic LQ optimal control problem is as follows.

\begin{taggedthm}{Problem (FSLQ)}\label{Prob:FSLQ}
For a given initial state $x\in\dbR^n$, find a control $v^*\in\sU=L_\dbF^2(0,T;\dbR^m)$ such that
\begin{equation}\label{SLQ:v*}
  \cJ(x;v^*) = \inf_{v\in\sU}\cJ(x;v) \equiv \cV(x).
\end{equation}
\end{taggedthm}

The control $v^*$ (if it exists) in \rf{SLQ:v*} is called an {\it open-loop optimal control}
for the initial state $x$, and $\cV(x)$ is called the {\it value} of Problem (FSLQ) at $x$.
Note that Problem (FSLQ) is an indefinite LQ optimal control problem, since we do not require
the weighting matrices to be positive semidefinite.
The following lemma establishes the solvability of Problem (FSLQ) under a condition that is
nearly necessary for the existence of open-loop optimal controls.
We refer the reader to Sun--Li--Yong \cite{Sun-Li-Yong2016} and the recent book \cite{Sun-Yong2020}
by Sun--Yong for proofs and further information.

\begin{lemma}\label{lmm:SLQ-main}
Suppose that there exists a constant $\a>0$ such that
\begin{equation}\label{FSLQ:cJ-uni-tu}
 \cJ(0;v)\ges \a\|v\|^2, \q\forall v\in\sU.
\end{equation}
Then the Riccati differential equation
\begin{equation}\label{Ric:SLQ}\left\{\begin{aligned}
& \dot\cP + \cP\cA + \cA^\top\cP + \cC^\top\cP\cC + \cQ\\
& \hp{\dot\cP} -(\cP\cB + \cC^\top\cP\cD + \cS^\top)(\cR+\cD^\top\cP\cD)^{-1}
                (\cB^\top\cP + \cD^\top\cP\cC + \cS)=0, \\
& \cP(T) = \cG
\end{aligned}\right.\end{equation}
admits a unique solution $\cP\in C([0,T];\dbS^n)$ such that
\begin{equation*}
   \cR+\cD^\top\cP\cD \gg 0.
\end{equation*}
Moreover, for each initial state $x$, a unique open-loop optimal control exists and is given by
the following closed-loop form:
$$ v^* = -(\cR+\cD^\top\cP\cD)^{-1}(\cB^\top\cP + \cD^\top\cP\cC + \cS)X, $$
and the value at $x$ is given by
$$ \cV(x) = \lan \cP(0)x,x\ran, \q\forall x\in\dbR^n. $$
\end{lemma}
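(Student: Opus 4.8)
The plan is to reduce everything to the solvability of the Riccati equation \rf{Ric:SLQ}: once a solution $\cP$ with $\cR+\cD^\top\cP\cD\gg0$ is in hand, a completion-of-squares argument delivers the closed-loop optimal control and the value formula at once. I would treat \rf{Ric:SLQ} as a terminal-value problem for a nonlinear matrix ODE whose right-hand side is smooth, hence locally Lipschitz, in $\cP$ on the open set where $\cR+\cD^\top\cP\cD$ is invertible. Local backward solvability near $t=T$ then requires only that $\cR(T)+\cD(T)^\top\cG\cD(T)$ be invertible, and the whole difficulty is to propagate the solution back to $t=0$ without $\cP$ blowing up or $\cR+\cD^\top\cP\cD$ becoming singular. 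The uniform convexity \rf{FSLQ:cJ-uni-tu} is exactly what supplies the a priori bounds that rule out both.

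First I would establish invertibility at the terminal time. Applying \rf{FSLQ:cJ-uni-tu} to a spike control $v=v_0\mathbf 1_{[T-h,T]}$ with $x=0$ and expanding the cost to leading order in $h$ (the state is $O(\sqrt h)$, so the running $\cQ$-term is $O(h^2)$ and only the $\cR$-term and the terminal $\cG$-term survive at order $h$) yields $\cR(T)+\cD(T)^\top\cG\cD(T)\ges\a I\gg0$. This gives a unique local solution $\cP\in C([T-\d,T];\dbS^n)$, and by continuity $\cR+\cD^\top\cP\cD$ stays positive definite, hence invertible, on the maximal interval of existence $(\t,T]$.

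The heart of the argument, and the step I expect to be the main obstacle, is the a priori estimate. On any $[t,T]\subset(\t,T]$ the solution exists with $\cR+\cD^\top\cP\cD$ invertible, so Itô's formula applied to $\lan\cP(s)\cX(s),\cX(s)\ran$ along an arbitrary control yields the identity
\[
\cJ(t,x;v)=\lan\cP(t)x,x\ran+\dbE\int_t^T\lan(\cR+\cD^\top\cP\cD)(v-\Th\cX),\,v-\Th\cX\ran ds,
\]
where $\Th=-(\cR+\cD^\top\cP\cD)^{-1}(\cB^\top\cP+\cD^\top\cP\cC+\cS)$. Taking $x=0$ and changing variables to $w=v-\Th\cX$, which is a bounded linear bijection of $L_\dbF^2(t,T;\dbR^m)$ with norm-equivalence constants depending only on the bounds of the data, I would transfer \rf{FSLQ:cJ-uni-tu} into $\dbE\int_t^T\lan(\cR+\cD^\top\cP\cD)w,w\ran ds\ges\b\|w\|^2$ for all $w$, with $\b>0$ independent of $t$. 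Testing against deterministic spikes and using Lebesgue differentiation then forces the pointwise, interval-independent bound $\cR+\cD^\top\cP\cD\ges\b I$. Minimizing over $v$ in the displayed identity gives $\lan\cP(t)x,x\ran=\cV(t,x)$, and the two-sided estimate $|\cV(t,x)|\les M|x|^2$ (upper bound from $v=0$, lower bound from uniform convexity) yields $|\cP(t)|\les M$ uniformly in $t$. With $\cR+\cD^\top\cP\cD$ bounded below by $\b I$ and $\cP$ bounded by $M$, the Riccati right-hand side stays bounded, so neither blow-up nor loss of invertibility can occur at $t=\t$; hence $\t<0$ and $\cP$ extends to all of $[0,T]$. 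Uniqueness of $\cP$ follows from backward uniqueness for the locally Lipschitz ODE.

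Finally, with $\cP\in C([0,T];\dbS^n)$ and $\cR+\cD^\top\cP\cD\gg0$ on $[0,T]$, the identity at $t=0$ shows that $\cJ(x;v)-\lan\cP(0)x,x\ran$ is a nonnegative quadratic form in $w=v-\Th\cX$ that vanishes only at $w=0$; by the bijection this pins down the unique minimizer $v^*=\Th\cX=-(\cR+\cD^\top\cP\cD)^{-1}(\cB^\top\cP+\cD^\top\cP\cC+\cS)\cX$ in closed-loop form, proves that it is the unique open-loop optimal control, and gives $\cV(x)=\lan\cP(0)x,x\ran$. The delicate point throughout is the positivity $\cR+\cD^\top\cP\cD\ges\b I$ on the existence interval: it breaks the apparent circularity between ``$\cP$ exists'' and ``the feedback $\Th$ is well defined,'' and the change of variables to $w$ is precisely what lets uniform convexity produce it.
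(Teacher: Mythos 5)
The paper does not actually prove this lemma; it is quoted from Sun--Li--Yong \cite{Sun-Li-Yong2016} and Sun--Yong \cite{Sun-Yong2020}, so your attempt has to be measured against those proofs. Your overall architecture (local existence backward from $T$, a priori bounds on the maximal interval, continuation, then completion of squares) is a legitimate and standard strategy, and several pieces are sound: the completion-of-squares identity on the existence interval, the uniform bound $|\cP(t)|\les M$ obtained from $\lan\cP(t)x,x\ran=\cV(t,x)$ together with the two-sided value estimates (the lower one using uniform convexity, which is inherited on $[t,T]$ with the same constant $\a$ by zero-extension of controls), and the final verification of optimality and uniqueness once $\cR+\cD^\top\cP\cD\gg0$ is known on all of $[0,T]$.

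The genuine gap is exactly at what you call the heart of the argument. The map $v\mapsto w=v-\Th\cX$ does \emph{not} have norm-equivalence constants ``depending only on the bounds of the data'': $\Th=-(\cR+\cD^\top\cP\cD)^{-1}(\cB^\top\cP+\cD^\top\cP\cC+\cS)$ involves $\cP$ and the very inverse you are trying to bound. What your argument actually yields is $\cR+\cD^\top\cP\cD\ges\a\,(1+K\|\Th\|_{L^\i(t,T)})^{-2}I$ a.e.\ on $(t,T)$; writing $\b$ for the essential infimum of the smallest eigenvalue of $\cR+\cD^\top\cP\cD$ on $(t,T)$ and using $|\cP|\les M$ and $\|\Th\|_{L^\i(t,T)}\les C_M/\b$, this self-consistency relation reads $(\b+KC_M)^2\ges\a\b$, which is satisfied by every sufficiently small $\b>0$ and therefore gives no $t$-independent lower bound. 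The estimate degenerates precisely as $t$ approaches the maximal time, where $\Th$ may blow up, so blow-up and loss of invertibility are not excluded: the circularity you claim the change of variables ``breaks'' is still present. The proofs in the cited references avoid this by building the candidate matrix $P(t)$ \emph{without} presupposing the Riccati solution --- for each $t$ the problem on $[t,T]$ is uniquely solvable by uniform convexity (a coercive quadratic functional on Hilbert space) and its value is $\lan P(t)x,x\ran$ with $P$ bounded --- and then deriving $\cR+\cD^\top P\cD\ges\a I$ a.e.\ from spike controls and dynamic programming, where the constant $\a$ is manifestly independent of $t$; only afterwards is $P$ identified with the solution of \rf{Ric:SLQ}. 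A further, minor, issue: with merely $L^\i$ coefficients your terminal spike gives only the averaged statement $\liminf_{h\to0}h^{-1}\int_{T-h}^T\lan(\cR+\cD^\top\cG\cD)v_0,v_0\ran ds\ges\a|v_0|^2$, whereas Carath\'eodory local existence for \rf{Ric:SLQ} needs an essential-infimum bound on a whole interval near $T$; this is fixable by spiking at interior points and comparing with the (continuous) solution of the Lyapunov equation, but as written the step is incomplete.
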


We have the following corollary to \autoref{lmm:SLQ-main}.

\begin{corollary}\label{crllry:standard-cndtn}
Suppose that
\begin{equation}\label{FSLQ:standard-condition}
\cG\ges0, \q \cR\gg0, \q \cQ-\cS^\top\cR^{-1}\cS\ges0.
\end{equation}
Then \rf{FSLQ:cJ-uni-tu} holds for some constant $\a>0$, and the solution of \rf{Ric:SLQ} satisfies
$$\cP(t)\ges0, \q\forall t\in[0,T]. $$
If, in addition to \rf{FSLQ:standard-condition}, $\cG>0$, then $\cP(t)>0$ for all $t\in[0,T]$.
\end{corollary}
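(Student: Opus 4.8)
The plan is to combine a completion-of-squares identity with the value-function representation of \autoref{lmm:SLQ-main}. First I would rewrite the cost functional. Since $\cR\gg0$, the inverse $\cR^{-1}$ exists and is bounded, so on setting $w=v+\cR^{-1}\cS\cX$ a purely algebraic manipulation of the integrand in \rf{cost:SLQ} gives the identity
$$\cJ(x;v)=\dbE\lan\cG\cX(T),\cX(T)\ran+\dbE\int_0^T\big[\lan(\cQ-\cS^\top\cR^{-1}\cS)\cX,\cX\ran+\lan\cR w,w\ran\big]dt.$$
Under \rf{FSLQ:standard-condition} each of the three terms on the right is nonnegative, so $\cJ(x;v)\ges0$ for every initial state $x$ and every control $v$; this nonnegativity is what will deliver the sign properties of $\cP$.

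Second, I would establish the uniform convexity \rf{FSLQ:cJ-uni-tu}. Taking $x=0$ and using $\cR\ges\d I$ for some $\d>0$, the identity immediately gives $\cJ(0;v)\ges\d\|w\|^2$, so the whole matter reduces to bounding $\|v\|$ by $\|w\|$. Substituting $v=w-\cR^{-1}\cS\cX$ into \rf{state:SLQ}, the state solves the linear equation with coefficients $\wt\cA=\cA-\cB\cR^{-1}\cS$, $\wt\cC=\cC-\cD\cR^{-1}\cS$ and forcing $w$, starting from $\cX(0)=0$; the standard a priori estimate for such SDEs then yields $\dbE\int_0^T|\cX|^2dt\les K\|w\|^2$. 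Since $\cR^{-1}\cS$ is bounded, $\|v\|^2\les2\|w\|^2+C\dbE\int_0^T|\cX|^2dt\les C'\|w\|^2$, whence $\cJ(0;v)\ges\d\|w\|^2\ges(\d/C')\|v\|^2$, i.e. \rf{FSLQ:cJ-uni-tu} holds with $\a=\d/C'$. I expect this to be the \emph{main obstacle}: mere nonnegativity of the cost only gives convexity, and it is the extra state estimate—crucially exploiting $\cX(0)=0$ so that $\cX$ is controlled by $w$ rather than merely by $v$—that upgrades convexity to uniform convexity. With \rf{FSLQ:cJ-uni-tu} in hand, \autoref{lmm:SLQ-main} supplies the solution $\cP$ of \rf{Ric:SLQ} and the value formula $\cV(x)=\lan\cP(0)x,x\ran$.

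Third, for $\cP(t)\ges0$ I would exploit the dynamic nature of the Riccati equation. Fix $t\in[0,T)$ and consider Problem (FSLQ) restricted to $[t,T]$ with $\cX(t)=x$. The completion-of-squares identity and the uniform-convexity argument above apply verbatim on $[t,T]$, so \autoref{lmm:SLQ-main} applies there as well; by uniqueness of the Riccati solution its terminal segment is $\cP|_{[t,T]}$, and hence the value of the restricted problem equals $\lan\cP(t)x,x\ran$. Since the restricted cost is nonnegative, $\lan\cP(t)x,x\ran\ges0$ for all $x$, i.e. $\cP(t)\ges0$.

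Finally, for strict positivity under $\cG>0$ I would argue by contradiction. If $\lan\cP(t)x,x\ran=0$ for some $x\ne0$, then the optimal control $v^*$ on $[t,T]$ makes all three nonnegative terms in the identity vanish: $\cR\gg0$ forces $w^*=0$ a.e., that is $v^*=-\cR^{-1}\cS\cX^*$, and $\cG>0$ forces $\cX^*(T)=0$ almost surely. But with $w^*=0$ the optimal state solves the homogeneous linear equation $d\cX^*=\wt\cA\cX^*\,ds+\wt\cC\cX^*\,dW$ with $\cX^*(t)=x$, whose fundamental solution $\Phi$ is almost surely invertible; thus $\cX^*(T)=\Phi(T)x\ne0$ almost surely when $x\ne0$, contradicting $\cX^*(T)=0$. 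Therefore $\cP(t)>0$ for every $t\in[0,T]$.
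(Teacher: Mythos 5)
Your proof is correct and follows essentially the same route as the paper's: the same completion-of-squares identity, the same reduction of uniform convexity to the estimate $\|v\|\les C\|w\|$ (which the paper packages as boundedness of the inverse of the operator $\mathfrak{L}v=v+\cR^{-1}\cS\cX^v_0$ via the bounded inverse theorem, while you obtain the identical content directly from the SDE a priori bound, in both cases crucially using $\cX(0)=0$), the same restriction of the problem to $[t,T]$ to get $\cP(t)\ges0$, and the same contradiction argument for strict positivity when $\cG>0$. Your only genuine addition is making explicit the almost-sure invertibility of the fundamental solution of the homogeneous linear SDE, a detail the paper asserts without proof when it declares the two-point boundary condition $\cX(0)=x\ne0$, $\cX(T)=0$ impossible.
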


\begin{proof}
Take an arbitrary initial state $x$ and an arbitrary control $v\in\sU$,
and let $\cX^v_x$ denote the solution of \rf{state:SLQ} corresponding to $x$ and $v$.
By Assumption \rf{FSLQ:standard-condition}, we have
\begin{align}\label{stdrd-cndtn:eqn1}
\cJ(x;v)
&= \dbE\lan\cG\cX^v_x(T),\cX^v_x(T)\ran + \dbE\int_0^T\Big\{\blan[\cQ(t)-\cS^\top(t)\cR^{-1}(t)\cS(t)]\cX^v_x(t),\cX^v_x(t)\bran \nn\\
&~\hp{=} +\big|\cR^{1\over2}(t)[v(t)+\cR^{-1}(t)\cS(t)\cX^v_x(t)]\big|^2\Big\}dt \nn\\
&\ges \dbE\lan\cG\cX^v_x(T),\cX^v_x(T)\ran + \dbE\int_0^T\big|\cR^{1\over2}(t)[v(t)+\cR^{-1}(t)\cS(t)\cX^v_x(t)]\big|^2 dt.
\end{align}
Since $\cG\ges0$ and $\cR(t)\ges \d I$ $\ae~t\in[0,T]$ for some constant $\d>0$, \rf{stdrd-cndtn:eqn1} implies
\begin{align}\label{SLQ:J>dEsth}
\cJ(x;v)  \ges \d\,\dbE\int_0^T\big|v(t)+\cR^{-1}(t)\cS(t)\cX^v_x(t)\big|^2dt \ges0.
\end{align}
For $x=0$, we can define a linear operator $\mathfrak{L}:\sU\to\sU$ by
$$ \mathfrak{L}v = v + \cR^{-1}\cS\cX^v_0.$$
It is easy to see that $\mathfrak{L}$ is bounded and bijective, with inverse $\mathfrak{L}^{-1}$ given by
$$ \mathfrak{L}^{-1} v = v - \cR^{-1}\cS\ti \cX^v_0,$$
where $\ti\cX^v_0$ is the solution of
$$\left\{\begin{aligned}
d\ti\cX^v_0(t) &= \big[(\cA-\cB\cR^{-1}\cS)\ti\cX^v_0+\cB v\big]dt \\
                &\hp{=\ } +\big[(\cC-\cD\cR^{-1}\cS)\ti\cX^v_0+\cD v\big]dW(t),\q t\in[0,T], \\
 \ti\cX^v_0(0) &=0.
\end{aligned}\right.$$
By the bounded inverse theorem, $\mathfrak{L}^{-1}$ is also bounded with $\|\mathfrak{L}^{-1}\|>0$. Thus,
\begin{align*}
\cJ(0;v)
&\ges \d\,\dbE\int_0^T\big|v(t)+\cR^{-1}(t)\cS(t)\cX^v_0(t)\big|^2dt = \d\,\dbE\int_0^T|(\mathfrak{L}v)(t)|^2dt\\
&\ges {\d\over\|\mathfrak{L}^{-1}\|^2}\,\dbE\int_0^T|v(t)|^2dt.
\end{align*}
This shows that \rf{FSLQ:cJ-uni-tu} holds with $\a={\d\over\|\mathfrak{L}^{-1}\|^2}$.
From \rf{SLQ:J>dEsth} and \autoref{lmm:SLQ-main} we obtain
$$ \lan \cP(0)x,x\ran = \inf_{v\in\sU}\cJ(x;v) \ges0, \q\forall x\in\dbR^n, $$
which implies $\cP(0)\ges0$. To see that $\cP(0)>0$ when $\cG>0$, we assume to the contrary that
$\lan \cP(0)x,x\ran=0$ for some $x\ne0$.
Let $\bar v$ be the optimal control for $x$. Then we have from \rf{stdrd-cndtn:eqn1} that
\begin{align*}
0 = \cJ(x;\bar v) \ges \dbE\lan\cG\cX^{\bar v}_x(T),\cX^{\bar v}_x(T)\ran
+ \dbE\int_0^T\big|\cR^{1\over2}(t)[\bar v(t)+\cR^{-1}(t)\cS(t)\cX^{\bar v}_x(t)]\big|^2 dt.
\end{align*}
Since $\cG>0$ and $\cR\gg0$, the above implies
$$ \cX^{\bar v}_x(T)=0, \q  \bar v(t)=-\cR^{-1}(t)\cS(t)\cX^{\bar v}_x(t). $$
Therefore, $\cX^{\bar v}_x$ satisfies the following equation:
$$\left\{\begin{aligned}
d\cX(t) &= [\cA(t)-\cB(t)\cR^{-1}(t)\cS(t)]\cX(t)dt + [\cC(t)-\cD(t)\cR^{-1}(t)\cS(t)]\cX(t)dW(t),  \\
 \cX(0) &= x, \q \cX(T)=0,
\end{aligned}\right.$$
which is impossible because $x\ne0$.
Finally, by considering Problem (FSLQ) over the interval $[t,T]$ and repeating the preceding argument,
we obtain $\cP(t)\ges0$ (respectively, $\cP(t)>0$ if $\cG>0$).
\end{proof}

\section{A study from the Hilbert space point of view}\label{sec:Hilbert}

Observe that $L^2_{\cF_T}(\Om;\dbR^n)$, $\sU$, and $L_\dbF^2(0,T;\dbR^n)$ are Hilbert spaces equipped with
their usual $L^2$-inner products, and that $L_\dbF^2(\Om;C([0,T];\dbR^n))\subseteq L_\dbF^2(0,T;\dbR^n)$.
For a terminal state $\xi\in L^2_{\cF_T}(\Om;\dbR^n)$ and a control $u\in\sU$, we denote by
$(Y_\xi^u,Z_\xi^u)$ the adapted solution to the state equation \rf{state}.
By the linearity of the state equation,
$$ Y_\xi^u = Y_\xi^0 + Y_0^u, \q Z_\xi^u = Z_\xi^0 + Z_0^u. $$
Note that $(Y_\xi^0,Z_\xi^0)$ linearly depends on $\xi$ and $(Y_0^u,Z_0^u)$ linearly depends on $u$.
Thus, \autoref{lmm:Sun2019} implies that the following are all bounded linear operators:
\begin{alignat*}{3}
&\cL_0: L^2_{\cF_T}(\Om;\dbR^n)\to \dbR^n;               \qq&& \cL_0\xi \deq Y_\xi^0(0), \\
&\cL_1: L^2_{\cF_T}(\Om;\dbR^n)\to L_\dbF^2(0,T;\dbR^n); \qq&& \cL_1\xi \deq Y_\xi^0, \\
&\cL_2: L^2_{\cF_T}(\Om;\dbR^n)\to L_\dbF^2(0,T;\dbR^n); \qq&& \cL_2\xi \deq Z_\xi^0, \\
&\cK_0: \sU\to \dbR^n;                                   \q&&  \cK_0u   \deq Y_0^u(0),\\
&\cK_1: \sU\to L_\dbF^2(0,T;\dbR^n);                     \qq&& \cK_1u   \deq Y_0^u, \\
&\cK_2: \sU\to L_\dbF^2(0,T;\dbR^n);                     \qq&& \cK_2u   \deq Z_0^u.
\end{alignat*}
Let us denote by $\cA^*$ the adjoint operator of a bounded linear operator $\cA$ between Banach spaces,
and denote the inner product of two elements $\phi$ and $\f$  in an $L^2$ space by $[\![\phi,\f]\!]$. Set
\begin{align*}
  M = \begin{pmatrix}  Q & S_1^\top & S_2^\top\\
                    S_1 & R_{11}   & R_{12}  \\
                    S_2 & R_{21}   & R_{22}  \end{pmatrix}, \q
\cL = \begin{pmatrix}\cL_1 \\ \cL_2 \\ 0\end{pmatrix}, \q
\cK = \begin{pmatrix}\cK_1 \\ \cK_2 \\ \cI\end{pmatrix},
\end{align*}
where $\cI$ is the identity operator. In terms of the above notation, the cost functional \rf{cost} can be written as follows:
\begin{align}\label{J-rep}
J(\xi;u)
&= [\![G(\cL_0\xi+\cK_0u),\cL_0\xi+\cK_0u]\!] + [\![M(\cL\xi+\cK u),\cL\xi+\cK u]\!] \nn\\
&= [\![(\cK_0^*G\cK_0 + \cK^*M\cK)u,u]\!]
   +2[\![(\cK_0^*G\cL_0 + \cK^*M\cL)\xi,u]\!] \nn\\
&~\hp{=} +[\![(\cL_0^*G\cL_0 + \cL^*M\cL)\xi,\xi]\!]  \nn\\
&\equiv [\![\cA u,u]\!] + 2[\![\cB\xi,u]\!] + [\![\cC\xi,\xi]\!].
\end{align}
Clearly,
\begin{alignat*}{3}
\cA &= \cK_0^*G\cK_0 + \cK^*M\cK &&: ~ \sU \to \sU, \\
\cC &= \cL_0^*G\cL_0 + \cL^*M\cL &&: ~ L^2_{\cF_T}(\Om;\dbR^n)\to L^2_{\cF_T}(\Om;\dbR^n)
\end{alignat*}
are bounded linear self-adjoint operators, and
$$ \cB = \cK_0^*G\cL_0 + \cK^*M\cL: ~ L^2_{\cF_T}(\Om;\dbR^n)\to \sU $$
is a bounded linear operator.

\ms

From the representation \rf{J-rep}, we obtain the following characterization of optimal controls.

\begin{theorem}\label{thm:iff-Hilbert-method}
Let \ref{ass:A1}--\ref{ass:A2} hold. For a given terminal state $\xi\in L^2_{\cF_T}(\Om;\dbR^n)$,
a control $u^*\in\sU$ is optimal if and only if
\begin{enumerate}[(i)]
\item $\cA\ges 0$ (i.e., $\cA$ is a positive operator), and
\item $\cA u^*+\cB\xi =0$.
\end{enumerate}
\end{theorem}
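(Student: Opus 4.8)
The plan is to use a standard variational (perturbation) argument, exploiting the self-adjointness of $\cA$. Fix $\xi$, and for arbitrary $v\in\sU$ and $\l\in\dbR$ expand $J(\xi;u^*+\l v)$ by means of the representation \rf{J-rep}. Because $\cA$ is self-adjoint, the two cross terms $\l[\![\cA u^*,v]\!]$ and $\l[\![\cA v,u^*]\!]$ coincide and collect into a single linear term, yielding the one-variable identity
$$ J(\xi;u^*+\l v) = J(\xi;u^*) + 2\l[\![\cA u^*+\cB\xi,v]\!] + \l^2[\![\cA v,v]\!]. $$
This identity is the heart of the matter; once it is in hand, both implications follow at once.

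For the ``if'' part, assume (i) and (ii). Then the linear coefficient $[\![\cA u^*+\cB\xi,v]\!]$ vanishes by (ii) and the quadratic coefficient $[\![\cA v,v]\!]$ is nonnegative by (i), so the identity gives $J(\xi;u^*+\l v)\ges J(\xi;u^*)$ for every $v$ and every $\l$. Taking $\l=1$ and letting $v=u-u^*$ range over $\sU$ shows $J(\xi;u)\ges J(\xi;u^*)$ for all admissible $u$; hence $u^*$ is optimal.

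For the ``only if'' part, assume $u^*$ is optimal. Then, for each fixed $v$, the scalar function $g(\l)\deq 2\l[\![\cA u^*+\cB\xi,v]\!]+\l^2[\![\cA v,v]\!]$ equals $J(\xi;u^*+\l v)-J(\xi;u^*)$ and is therefore nonnegative for all $\l\in\dbR$. A real quadratic $a\l^2+b\l$ is nonnegative on all of $\dbR$ if and only if $a\ges0$ and $b=0$; applying this with $a=[\![\cA v,v]\!]$ and $b=2[\![\cA u^*+\cB\xi,v]\!]$ yields, for every $v\in\sU$,
$$ [\![\cA v,v]\!]\ges0 \qquad\hb{and}\qquad [\![\cA u^*+\cB\xi,v]\!]=0. $$
The first statement is precisely (i). Since the second holds for all $v$, the vector $\cA u^*+\cB\xi$ is orthogonal to the entire space $\sU$ and thus vanishes, which is (ii).

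The argument is elementary, so there is no serious obstacle. The only points needing a little care are the correct application of the self-adjointness of $\cA$ when merging the cross terms, and the elementary observation that nonnegativity of the scalar quadratic for every $\l$ forces its linear coefficient to vanish. It is precisely this last observation that allows conditions (i) and (ii) to be extracted \emph{simultaneously} from the single optimality inequality, rather than having to derive the operator positivity and the stationarity condition by separate arguments.
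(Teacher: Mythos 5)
Your proof is correct and takes essentially the same approach as the paper: both expand $J(\xi;u^*+\l v)$ via the representation \eqref{J-rep}, using the self-adjointness of $\cA$, and then analyze nonnegativity of the resulting scalar quadratic in $\l$. The only cosmetic difference is that you extract (i) and (ii) by citing the elementary fact that $a\l^2+b\l\ges0$ for all $\l\in\dbR$ forces $a\ges0$ and $b=0$, whereas the paper reaches the same conclusions by taking $\l=\pm1$ and adding, and then dividing by $\l$ and letting $\l\to0^{\pm}$.
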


\begin{proof}
For every $u\in\sU$ and $\l\in\dbR$, we have
\begin{align*}
J(\xi;u^*+\l u)
  &= \Lan \cA(u^*+\l u),u^*+\l u \Ran + 2\Lan \cB\xi,u^*+\l u \Ran + \Lan \cC\xi,\xi \Ran\\
  &= J(\xi;u^*) + \l^2\Lan\cA u,u\Ran + 2\l\Lan\cA u^*+ \cB\xi,u\Ran,
\end{align*}
from which it follows that $u^*$ is optimal for $\xi$ if and only if
\begin{align}\label{Hilbert:iff-1}
\l^2\Lan\cA u,u\Ran + 2\l\Lan\cA u^*+ \cB\xi,u\Ran \ges0, \q\forall\l\in\dbR,~\forall u\in\sU.
\end{align}
Clearly, (i) and (ii) imply \rf{Hilbert:iff-1}.
Conversely, if \rf{Hilbert:iff-1} holds, taking $\l=\pm1$ in \rf{Hilbert:iff-1}
and then adding the resulting inequalities, we obtain $\cA\ges 0$.
Dividing both sides of the inequality in \rf{Hilbert:iff-1} by $\l>0$ and then sending $\l\to0$ gives
$$ \Lan\cA u^*+ \cB\xi,u\Ran \ges0, \q\forall u\in\sU. $$
Dividing both sides of the inequality in \rf{Hilbert:iff-1} by $\l<0$ and then sending $\l\to0$ gives
$$ \Lan\cA u^*+ \cB\xi,u\Ran \les0, \q\forall u\in\sU. $$
Therefore, $\Lan\cA u^*+ \cB\xi,u\Ran=0$ for all $u\in\sU$ and thereby (ii) holds.
\end{proof}

We see from \autoref{thm:iff-Hilbert-method} that the positivity condition $\cA\ges 0$,
which by \rf{J-rep} is equivalent to
$$ J(0;u) \ges 0, \q\forall u\in\sU, $$
is necessary for the existence of an optimal control.
On the other hand, if $\cA$ is uniformly positive, i.e., there exists a constant $\d>0$ such that
$$ \Lan \cA u,u \Ran = J(0;u) \ges \d\,\dbE\int_0^T|u(t)|^2dt, \q\forall u\in\sU, $$
then by \autoref{thm:iff-Hilbert-method}, for each $\xi\in L^2_{\cF_T}(\Om;\dbR^n)$,
an optimal control $u^*$ uniquely exists and is given by
$$ u^* = -\cA^{-1}\cB\xi. $$
When the necessity condition $\cA\ges 0$ holds but it is not clear if $\cA$ is uniformly positive,
we can define, for each $\e>0$, a new cost functional $J_\e(\xi;u)$ by
\begin{align*}
J_\e(\xi;u)
  &= J(\xi;u) + \e\,\dbE\int_0^T|u(t)|^2dt \\
  &= \Lan (\cA+\e\cI)u,u \Ran + 2\Lan \cB\xi,u \Ran + \Lan \cC\xi,\xi \Ran.
\end{align*}
For the new cost functional, the (unique) optimal control $u^*_\e$ for $\xi$ exists and is given by
\begin{align*}
u^*_\e = -(\cA+\e\cI)^{-1}\cB\xi.
\end{align*}
In terms of the family $\{u^*_\e\}_{\e>0}$, we now provide a sufficient and necessary condition
for the existence of optimal controls.

\begin{theorem}\label{thm:A+eI}
Let \ref{ass:A1}--\ref{ass:A2} hold and assume that the necessity condition $\cA\ges 0$ holds.
Then an optimal control exists for a given terminal state $\xi\in L^2_{\cF_T}(\Om;\dbR^n)$
if and only if one of the following conditions holds:
\begin{enumerate}[(i)]
\item the family $\{u_\e^*\}_{\e>0}$ is bounded in the Hilbert space $\sU$;
\item $u_\e^*$ converges weakly in $\sU$ as $\e\to0$;
\item $u_\e^*$ converges strongly in $\sU$ as $\e\to0$.
\end{enumerate}
Whenever (i), (ii), or (iii) is satisfied, the strong (weak) limit $u^*=\lim_{\e\to0}u^*_\e$
is an optimal control for $\xi$.
\end{theorem}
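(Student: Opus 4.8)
The plan is to prove the cycle of implications $(\text{optimal control exists}) \Rightarrow (iii) \Rightarrow (ii) \Rightarrow (i) \Rightarrow (\text{optimal control exists})$, which makes all three conditions equivalent to solvability, and to read off the optimality of the limit along the way. Write $b=\cB\xi$ and $\cA_\e=\cA+\e\cI$, so that $u_\e^*=-\cA_\e^{-1}b$. Since $\cA\ges0$ is self-adjoint, $\cA_\e\ges\e\cI$ is boundedly invertible and the operator inequalities $0\les\e\cA_\e^{-1}\les\cI$ and $0\les\cA_\e^{-1}\cA=\cI-\e\cA_\e^{-1}\les\cI$ hold; in particular $\|\e\cA_\e^{-1}\|\les1$ and $\|\cA_\e^{-1}\cA\|\les1$ uniformly in $\e>0$. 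I will also use that $\e\mapsto\|u_\e^*\|$ is nonincreasing (because $\cA_\e^{-2}$ is operator-decreasing in $\e$), so that (i) is equivalent to $\lim_{\e\to0}\|u_\e^*\|<\i$.

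The implication $(iii)\Rightarrow(ii)$ is immediate, and $(ii)\Rightarrow(i)$ follows since a weakly convergent family is norm-bounded (Banach--Steinhaus applied along any sequence $\e_k\downarrow0$, combined with the monotonicity above). For $(i)\Rightarrow(\text{existence})$, boundedness of $\{u_\e^*\}$ lets me extract $\e_k\downarrow0$ with $u_{\e_k}^*\rightharpoonup u^*$ weakly. From $\cA u_\e^*+\e u_\e^*=-b$ the term $\e_k u_{\e_k}^*\to0$ strongly, while $\cA u_{\e_k}^*\rightharpoonup\cA u^*$ by weak continuity of the bounded operator $\cA$; passing to the weak limit yields $\cA u^*+b=0$, and then \autoref{thm:iff-Hilbert-method} (together with $\cA\ges0$) shows $u^*$ is optimal.

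The heart of the argument is $(\text{existence})\Rightarrow(iii)$. Let $\bar u$ be any optimal control, so $\cA\bar u=-b$ by \autoref{thm:iff-Hilbert-method}, whence $u_\e^*=\cA_\e^{-1}\cA\bar u$. Let $P$ be the orthogonal projection of $\sU$ onto $\overline{\im\cA}=(\ker\cA)^\perp$. Since $(\cI-P)\bar u\in\ker\cA$, we have $\cA\bar u=\cA P\bar u$, and a one-line computation gives $u_\e^*-P\bar u=(\cA_\e^{-1}\cA-\cI)P\bar u=-\e\cA_\e^{-1}P\bar u$. Thus it remains to show $\e\cA_\e^{-1}v\to0$ for $v=P\bar u\in\overline{\im\cA}$. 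Because $\|\e\cA_\e^{-1}\|\les1$ for all $\e$, it suffices to verify this on the dense subset $\im\cA$: if $v=\cA w$ then $\|\e\cA_\e^{-1}\cA w\|=\e\|\cA_\e^{-1}\cA w\|\les\e\|w\|\to0$, and a standard density estimate extends the conclusion to all of $\overline{\im\cA}$. Hence $u_\e^*\to P\bar u$ strongly, which is (iii); moreover $\cA(P\bar u)=\cA\bar u=-b$, so the strong limit $P\bar u$ is itself optimal (in fact the optimal control of least norm).

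This also settles the final claim: under (iii) the strong limit is the optimal $P\bar u$, while under (ii) the weak-limit computation of the previous paragraph, applied to the full family, identifies the weak limit $u^*$ as a solution of $\cA u^*+b=0$, hence optimal. The main obstacle is precisely the $(\text{existence})\Rightarrow(iii)$ step: one must upgrade the merely subsequential, weak information coming from boundedness to strong convergence of the entire family, and the decisive inputs are the uniform bound $\|\e\cA_\e^{-1}\|\les1$ and the density of $\im\cA$ in $(\ker\cA)^\perp$ --- both consequences of the self-adjointness and nonnegativity of $\cA$.
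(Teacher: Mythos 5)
Your proof is correct. Its core step, $(\hbox{existence})\Rightarrow(\hbox{iii})$, is essentially identical to the paper's: the paper writes the optimal control as $u^*=u+v$ with $u\in\ker\cA$ and $v\in\overline{\im\cA}$ (your $v=P\bar u$), observes that $v$ is still optimal, and runs exactly your density estimate $\|u_\e^*-v\|\les\d+\e\|w\|$ using $\|(\cA+\e\cI)^{-1}\|\les\e^{-1}$ and $\|(\cA+\e\cI)^{-1}\cA\|\les1$. Where you genuinely diverge is in $(\hbox{i})\Rightarrow(\hbox{existence})$: the paper extracts a weakly convergent subsequence and then invokes Mazur's lemma, pushing convex combinations $v_k=\sum_j\a_{kj}u^*_{\e_{k+j}}$ through the quadratic functional to obtain $J(\xi;v^*)\les\liminf_k\sum_j\a_{kj}J_{\e_{k+j}}(\xi;u^*_{\e_{k+j}})\les J(\xi;u)$ for every $u\in\sU$, so optimality is read off from the cost itself; you instead pass to the weak limit directly in the resolvent identity $\cA u_{\e_k}^*+\e_k u_{\e_k}^*=-\cB\xi$, using that a bounded operator is weak-to-weak continuous, and then conclude optimality from \autoref{thm:iff-Hilbert-method}(ii). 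Your route is shorter and identifies the limit at once as a solution of the stationarity equation $\cA u^*+\cB\xi=0$; the paper's route uses only convexity and lower semicontinuity of the cost along strongly convergent convex combinations, an argument that would survive in settings where an exact linear characterization of optimality is not available. Your auxiliary observations (monotonicity of $\e\mapsto\|u_\e^*\|$ via spectral calculus, and the bound $\|\e(\cA+\e\cI)^{-1}\|\les1$) are correct, though the monotonicity is not actually needed once one argues along subsequences.
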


\begin{proof}
Suppose that $u^*\in\sU$ is optimal for $\xi$. Then by \autoref{thm:iff-Hilbert-method},
$$ \cA u^*+\cB\xi =0. $$
Write $u^*$ as $u^*=u+v$ with $u\in\ker\cA$ and $v\in\overline{\im\cA}$, where $\ker\cA$ and $\overline{\im\cA}$
are the kernel and the closure of the image of $\cA$, respectively. Then
$$ \cA v+\cB\xi =0, $$
and thereby $v$ is also an optimal control for $\xi$.
For a fixed but arbitrary $\d>0$, since $v\in\overline{\im\cA}$, we can find a $w\in\cU$ such that
$\|\cA w-v\|\les\d$. Then using the fact
$$ -\cB\xi = \cA v, \q \|(\cA+\e\cI)^{-1}\| \les \e^{-1}, \q \|(\cA+\e\cI)^{-1}\cA\| \les 1, $$
we have for any $\e>0$ that
\begin{align*}
\|u_\e^*-v\|
  &= \|-(\cA+\e\cI)^{-1}\cB\xi-v\| = \|(\cA+\e\cI)^{-1}\cA v-v\| = \e\|(\cA+\e\cI)^{-1}v\| \\
  &\les \e\|(\cA+\e\cI)^{-1}(v-\cA w)\| + \e\|(\cA+\e\cI)^{-1}\cA w\|  \\
  &\les \d + \e\|w\|.
\end{align*}
Letting $\e\to0$ and noting that $\d>0$ is arbitrary, we conclude that $u_\e^*$ converges strongly
to the optimal control $v$ as $\e\to0$.

\ms

It is clear that (iii) $\Ra$ (ii) $\Ra$ (i).
So it remains to show that (i) implies the existence of an optimal control for $\xi$.
When (i) holds, we can select a sequence $\{\e_n\}_{n=1}^\i$ with $\e_n$ decreasing to 0
(as $n\to\i$) such that $u^*_{\e_n}$ converges weakly to some $v^*\in\sU$.
By Mazur's lemma, for each integer $k\ges1$, there exists finite many positive numbers
$\a_{k1},\ldots,\a_{kN_k}$ with $\a_{k1}+\cdots+\a_{kN_k}=1$ such that
$v_k\deq \sum_{j=1}^{N_k}\a_{kj}u^*_{\e_{k+j}}$ converges strongly to $v^*$ as $k\to\i$.
Then
\begin{align}\label{Hilbert:J(v*)}
J(\xi;v^*)
&= \lim_{k\to\i}\(\Lan\cA v_k,v_k\Ran + 2\Lan\cB\xi,v_k\Ran + \Lan\cC\xi,\xi\Ran\) \nn\\
&\les \liminf_{k\to\i}\sum_{j=1}^{N_k}\a_{kj}\(\Lan\cA u^*_{\e_{k+j}},u^*_{\e_{k+j}}\Ran
      +2\Lan\cB\xi,u^*_{\e_{k+j}}\Ran + \Lan\cC\xi,\xi\Ran\) \nn\\
&\les \liminf_{k\to\i}\sum_{j=1}^{N_k}\a_{kj}\(\Lan(\cA+\e_{k+j}\cI) u^*_{\e_{k+j}},u^*_{\e_{k+j}}\Ran
      +2\Lan\cB\xi,u^*_{\e_{k+j}}\Ran + \Lan\cC\xi,\xi\Ran\) \nn\\
&= \liminf_{k\to\i}\sum_{j=1}^{N_k}\a_{kj}J_{\e_{k+j}}(\xi;u^*_{\e_{k+j}}).
\end{align}
Since $\lim_{k\to\i}\sum_{j=1}^{N_k}\a_{kj}\e_{k+j}=0$ and for any $u\in\sU$,
\begin{align*}
J_{\e_{k+j}}(\xi;u^*_{\e_{k+j}})\les J_{\e_{k+j}}(\xi;u) = J(\xi;u) + \e_{k+j}\|u\|^2,
\end{align*}
we conclude from \rf{Hilbert:J(v*)} that
$$J(\xi;v^*)\les J(\xi;u), \q\forall u\in\sU. $$
This shows that $v^*$ is an optimal control for $\xi$ and hence completes the proof.
\end{proof}

\section{A characterization of optimal controls in terms of FBSDEs}\label{sec:FBSDE}

In the previous section, \autoref{thm:iff-Hilbert-method} provides a characterization of the optimal control
using the operators in \rf{J-rep}. We now present an alternative characterization in terms of forward-backward
stochastic differential equations, which is more convenient for the verification of optimal controls.
This result will be used in \autoref{sec:construction} to prove the control constructed there is optimal.

\begin{theorem}\label{thm:optimality}
Let \ref{ass:A1}--\ref{ass:A2} hold and let the terminal state $\xi\in L^2_{\cF_T}(\Om;\dbR^n)$ be given.
A control $u^*\in\sU$ is optimal for $\xi$ if and only if the following conditions hold:
\begin{enumerate}[(i)]
\item $J(0;u)\ges0$ for all $u\in\sU$.
\item The adapted solution $(X^*,Y^*,Z^*)$ to the decoupled FBSDE
      \begin{equation}\label{FBSDE}
      \left\{\begin{aligned}
        dX^*(t) &= (-A^\top X^* +QY^* +S_1^\top Z^* +S_2^\top u^*)dt \\
                &~\hp{=} +(-C^\top X^* +S_1Y^* +R_{11}Z^* +R_{12}u^*)dW, \\
        dY^*(t) &= (AY^* +Bu^* +CZ^*)dt + Z^*dW, \\
         X^*(0) &= GY^*(0), \q Y^*(T) = \xi,
      \end{aligned}\right.
      \end{equation}
      satisfies
      \begin{align}\label{DJ=0}
        S_2Y^* + R_{21}Z^* - B^\top X^*  + R_{22}u^*  =0.
      \end{align}
\end{enumerate}
\end{theorem}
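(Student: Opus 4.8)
The plan is to deduce the statement directly from the Hilbert-space characterization in \autoref{thm:iff-Hilbert-method} by showing that its two conditions are precisely (i) and (ii). First I would note that the representation \rf{J-rep} gives $J(0;u)=\Lan\cA u,u\Ran$, so condition (i) here is exactly the positivity $\cA\ges0$ in \autoref{thm:iff-Hilbert-method}(i). It therefore remains to prove that, under $\cA\ges0$, the stationarity equation $\cA u^*+\cB\xi=0$ of \autoref{thm:iff-Hilbert-method}(ii) is equivalent to the FBSDE condition \rf{DJ=0}.

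To establish this equivalence I would identify $\cA u^*+\cB\xi$ with the gradient of the cost. By \rf{J-rep},
$$\Lan\cA u^*+\cB\xi,u\Ran=\frac{1}{2}\frac{d}{d\l}\Big|_{\l=0}J(\xi;u^*+\l u),\q\forall u\in\sU,$$
so it suffices to compute this Gateaux derivative from \rf{state}--\rf{cost} and match it against \rf{DJ=0}. Writing $(\delta Y,\delta Z)\deq(Y_0^u,Z_0^u)$ for the adapted solution of \rf{state} with control $u$ and terminal value $0$, linearity gives $Y_\xi^{u^*+\l u}=Y^*+\l\,\delta Y$ and $Z_\xi^{u^*+\l u}=Z^*+\l\,\delta Z$; differentiating \rf{cost} and using the symmetry of $M$ then expresses the derivative as the expectation of $\lan GY^*(0),\delta Y(0)\ran$ plus the time-integral of the pairing of $(\delta Y,\delta Z,u)$ with $M(Y^*,Z^*,u^*)^\top$.

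The crux is a duality computation that removes the implicit dependence of $(\delta Y,\delta Z)$ on $u$. The forward equation for $X^*$ in \rf{FBSDE} is built so that its drift and diffusion carry exactly the first two block-rows of $M$ applied to $(Y^*,Z^*,u^*)$, corrected by the adjoint terms $-A^\top X^*$ and $-C^\top X^*$. Applying It\^o's formula to $t\mapsto\lan X^*(t),\delta Y(t)\ran$, integrating over $[0,T]$, and taking expectation, I would use $\delta Y(T)=0$ to discard the terminal boundary term and $X^*(0)=GY^*(0)$ to reproduce $\dbE\lan GY^*(0),\delta Y(0)\ran$. In the drift of the product the pairs $\lan A^\top X^*,\delta Y\ran$, $\lan X^*,A\delta Y\ran$ cancel, and the cross-variation contribution $\lan C^\top X^*,\delta Z\ran$ cancels $\lan X^*,C\delta Z\ran$; what survives converts the $(\delta Y,\delta Z)$-part of the gradient into $-\dbE\int_0^T\lan B^\top X^*,u\ran dt$. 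Substituting back leaves
$$\Lan\cA u^*+\cB\xi,u\Ran=\dbE\int_0^T\lan S_2 Y^*+R_{21}Z^*-B^\top X^*+R_{22}u^*,u\ran dt,\q\forall u\in\sU.$$

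Finally, the right-hand side is a bounded linear functional on $\sU$ whose Riesz representative is the process $S_2 Y^*+R_{21}Z^*-B^\top X^*+R_{22}u^*$; hence $\cA u^*+\cB\xi=0$ in $\sU$ holds if and only if \rf{DJ=0} holds $dt\otimes d\dbP$-almost everywhere. Combined with the identification of (i) with $\cA\ges0$, \autoref{thm:iff-Hilbert-method} then yields the asserted equivalence. I expect the It\^o/duality step to be the only delicate point: one must keep careful track of the transposes and of which block-row of $M$ feeds the drift versus the diffusion of the adjoint equation, so that all $A$- and $C$-cross terms cancel and exactly the combination in \rf{DJ=0} is reconstructed; the rest is routine bookkeeping.
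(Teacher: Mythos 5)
Your proposal is correct and takes essentially the same route as the paper: the paper likewise expands $J(\xi;u^*+\e u)-J(\xi;u^*)$ into a quadratic in $\e$ and performs exactly the same It\^o/duality computation on $t\mapsto\lan X^*(t),\delta Y(t)\ran$ (with the same cancellations of the $A$- and $C$-cross terms) to convert the $(\delta Y,\delta Z)$-part of the first-order term into $-\,\dbE\int_0^T\lan B^\top X^*,u\ran dt$, yielding the identity you state. The only cosmetic difference is that you wrap up by invoking \autoref{thm:iff-Hilbert-method}, whereas the paper re-runs the quadratic-in-$\e$ argument inline; the mathematical content is identical.
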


\begin{proof}
First we note that $u^*\in\sU$ is optimal for $\xi$ if and only if
\begin{align}\label{J-J*>0}
J(\xi;u^*+\e u) - J(\xi;u^*)\ges0, \q\forall u\in\sU,~\forall \e\in\dbR.
\end{align}
Let $u\in\sU$ and $\e\in\dbR$ be fixed but arbitrary. Denote by $(Y,Z)$ the solution of
$$\left\{\begin{aligned}
dY(t) &= (AY +Bu +CZ)dt + ZdW,\\
 Y(T) &= 0,
\end{aligned}\right.$$
and by $(Y^\e,Z^\e)$ the solution of
$$\left\{\begin{aligned}
dY^\e(t) &= [AY^\e +B(u^*+\e u) + CZ^\e]dt + Z^\e dW, \\
 Y^\e(T) &=\xi.
\end{aligned}\right.$$
Clearly, $(Y^\e,Z^\e)=(Y^*+\e Y,Z^*+\e Z)$. Thus,
\begin{align}\label{J-J*}
& J(\xi;u^*+\e u) - J(\xi;u^*) \nn\\
&\q= 2\e\dbE\Bigg[\lan GY^*(0),Y(0)\ran
    +\int_0^T\Blan\begin{pmatrix}  Q & \!\!S_1^\top & \!\!S_2^\top \\
                                 S_1 & \!\!R_{11}   & \!\!R_{12}   \\
                                 S_2 & \!\!R_{21}   & \!\!R_{22}   \end{pmatrix}\!
    \begin{pmatrix}Y^* \\ Z^* \\ u^*\end{pmatrix}\!,
    \begin{pmatrix}Y   \\ Z   \\ u   \end{pmatrix}\Bran dt\Bigg] \nn\\
&\q~\hp{=} +\e^2\dbE\Bigg[\lan GY(0),Y(0)\ran
    +\int_0^T\Blan
    \begin{pmatrix}  Q & \!\!S_1^\top & \!\!S_2^\top\\
                   S_1 & \!\!R_{11}   & \!\!R_{12}  \\
                   S_2 & \!\!R_{21}   & \!\!R_{22}  \end{pmatrix}\!
    \begin{pmatrix}Y \\ Z \\ u\end{pmatrix}\!,
    \begin{pmatrix}Y \\ Z \\ u\end{pmatrix}\Bran dt\Bigg] \nn\\
&\q= 2\e\dbE\Bigg[\lan GY^*(0),Y(0)\ran +\int_0^T\Blan
     \begin{pmatrix}  Q & \!\!S_1^\top& \!\!S_2^\top\\
                    S_1 & \!\!R_{11}  & \!\!R_{12}  \\
                    S_2 & \!\!R_{21}  & \!\!R_{22}  \end{pmatrix}\!
     \begin{pmatrix}Y^*\\ Z^*\\ u^*\end{pmatrix}\!,
     \begin{pmatrix}Y  \\ Z  \\ u  \end{pmatrix}\Bran dt\Bigg] + \e^2J(0;u).
\end{align}
Using integration by parts we obtain
\begin{align*}
-\lan GY^*(0),Y(0)\ran
&= \dbE\int_0^T\[\lan QY^* + S_1^\top Z^* + S_2^\top u^*,Y\ran \\
&~\hp{=} +\lan S_1Y^* +R_{11}Z^* +R_{12}u^*,Z\ran +\lan B^\top X^*,u\ran\]dt,
\end{align*}
which, substituted in \rf{J-J*}, yields
\begin{align*}
J(\xi;u^*+\e u) - J(\xi;u^*)
&= \e^2J(0;u) + 2\e\dbE\int_0^T\lan S_2Y^* +R_{21}Z^* -B^\top X^* +R_{22}u^*,u\ran dt.
\end{align*}
From the above, it is not difficult to see that \rf{J-J*>0} holds if and only if \rf{DJ=0}
holds and $J(0;u)\ges0$ for all $u\in\sU$.
\end{proof}

\section{Connections with forward stochastic LQ optimal control problems}\label{sec:connection}

It is not easy to decide whether Problem (BSLQ) admits an optimal control (let alone constructing one)
for a given terminal state when the operator $\cA$ is merely positive.
However, if the case that $\cA$ is uniformly positive can be solved, then we are at least able to construct
a minimizing sequence for Problem (BSLQ), by using \autoref{thm:A+eI}.
To solve the uniform positivity case, we investigate in this section the connection between Problem (BSLQ)
and the forward stochastic LQ optimal control problem (FSLQ problem, for short) under
the uniform positivity condition:
\begin{taggedassumption}{(A3)}\label{ass:A3}
There exists a constant $\d>0$ such that
\begin{align}\label{Uni-positive}
\Lan \cA u,u \Ran = J(0;u) \ges \d\,\dbE\int_0^T|u(t)|^2dt, \q\forall u\in\sU.
\end{align}
\end{taggedassumption}

Let us consider the controlled linear forward stochastic differential equation
\begin{equation}\label{FLQ:state}
\left\{\begin{aligned}
dX(t) &= [A(t)X(t)+B(t)u(t)+C(t)v(t)]dt + v(t)dW(t), \q t\in[0,T],\\
 X(0) &= x,
\end{aligned}\right.
\end{equation}
and, for $\l>0$, the cost functional
\begin{align}\label{FLQ:cost}
\cJ_\l(x;u,v) \deq \dbE\bigg\{\l|X(T)|^2
+\!\int_0^T\!\Blan
          \begin{pmatrix}  Q(t) & \!\!S_1^\top(t) & \!\!S_2^\top(t)\\
                         S_1(t) & \!\!R_{11}(t)   & \!\!R_{12}(t)  \\
                         S_2(t) & \!\!R_{21}(t)   & \!\!R_{22}(t)  \end{pmatrix}\!\!
          \begin{pmatrix}X(t) \\ v(t) \\ u(t)\end{pmatrix}\!,\!
          \begin{pmatrix}X(t) \\ v(t) \\ u(t)\end{pmatrix}\Bran dt \bigg\}.
\end{align}
In the above, the control is the pair
$$(u,v) \in L_\dbF^2(0,T;\dbR^m)\times L_\dbF^2(0,T;\dbR^n) \equiv \sU\times\sV. $$
We impose the following FSLQ problem.

\begin{taggedthm}{Problem (FSLQ)$_\l$.}\label{Prob:FSLQ-l}
For a given initial state $x\in\dbR^n$, find a control $(u^*,v^*)\in\sU\times\sV$ such that
\begin{align*}
\cJ_\l(x;u^*,v^*) = \inf_{(u,v)\in\sU\times\sV}\cJ_\l(x;u,v)\equiv \cV_\l(x).
\end{align*}
\end{taggedthm}

We have the following result, which plays a basic role in the subsequent analysis.

\begin{theorem}\label{thm:FLQ-convex}
Let \ref{ass:A1}--\ref{ass:A2} hold. If \ref{ass:A3} holds, then there exist constants $\rho>0$ and $\l_0>0$
such that for $\l\ges\l_0$,
\begin{align}\label{cJ-tu}
\cJ_\l(0;u,v) \ges \rho\,\dbE\int_0^T\[|u(t)|^2+|v(t)|^2\]dt, \q\forall (u,v)\in\sU\times\sV.
\end{align}
If, in addition, $G=0$, then for $\l\ges\l_0$,
\begin{align}\label{cJ-tu2}
\cJ_\l(x;u,v) \ges \rho\,\dbE\int_0^T\[|u(t)|^2+|v(t)|^2\]dt, \q\forall (u,v)\in\sU\times\sV,~\forall x\in\dbR^n.
\end{align}
\end{theorem}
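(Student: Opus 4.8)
The plan is to derive an exact identity relating the forward cost $\cJ_\l$ to the backward cost $J$, and then to transfer the uniform positivity \ref{ass:A3} across this identity with the help of the a priori estimate in \autoref{lmm:Sun2019}. The crucial observation is that the forward state equation \rf{FLQ:state} and the backward state equation \rf{state} share the same coefficient structure, the diffusion control $v$ in the forward problem playing the role of the martingale integrand $Z$ in the backward problem. Precisely, fix $(u,v)\in\sU\times\sV$ and an initial state $x\in\dbR^n$, and let $X$ be the solution of \rf{FLQ:state}. Setting $Y=X$ and $Z=v$, one checks directly that the adapted, continuous pair $(X,v)$ solves the backward equation \rf{state} with control $u$ and terminal value $\xi\deq X(T)$. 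By the uniqueness part of \autoref{lmm:Sun2019}, $(X,v)=(Y^u_\xi,Z^u_\xi)$ with $\xi=X(T)$; in particular $Y^u_\xi(0)=X(0)=x$.

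Since the running weights in \rf{cost} and \rf{FLQ:cost} coincide, this identification gives the cost identity
\begin{align*}
\cJ_\l(x;u,v) = \l\,\dbE|X(T)|^2 + J(X(T);u) - \lan Gx,x\ran.
\end{align*}
In the first assertion I take $x=0$, so the last term vanishes; in the second I assume $G=0$, so the last term vanishes for every $x$. In either case $\cJ_\l(x;u,v) = \l\,\dbE|X(T)|^2 + J(X(T);u)$. Next I would invoke the representation \rf{J-rep}: writing $\xi=X(T)$,
\begin{align*}
J(\xi;u) = \Lan\cA u,u\Ran + 2\Lan\cB\xi,u\Ran + \Lan\cC\xi,\xi\Ran.
\end{align*}
By \ref{ass:A3} we have $\Lan\cA u,u\Ran\ges\d\|u\|^2$, while $\cB$ and $\cC$ are bounded operators; estimating the cross term through Young's inequality absorbs half of $\d\|u\|^2$ at the cost of a fixed multiple of $\|\xi\|^2=\dbE|X(T)|^2$. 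Choosing $\l_0$ large enough that, for $\l\ges\l_0$, the factor $\l$ dominates this multiple together with $\|\cC\|$, I obtain a constant $c>0$, independent of $x$, with
\begin{align*}
\cJ_\l(x;u,v) \ges c\(\dbE|X(T)|^2 + \|u\|^2\).
\end{align*}

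It remains to upgrade the right-hand side so that it also controls $\|v\|^2$. For this I apply the estimate \rf{GUJI} to the adapted solution $(X,v)=(Y^u_\xi,Z^u_\xi)$, obtaining
\begin{align*}
\|v\|^2 = \dbE\int_0^T|v(t)|^2dt \les K\(\dbE|X(T)|^2 + \|u\|^2\),
\end{align*}
whence $\|u\|^2+\|v\|^2\les(1+K)\(\|u\|^2+\dbE|X(T)|^2\)$. Combining with the previous display yields both assertions with $\rho=c/(1+K)$ and the same threshold $\l_0$.

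The step I expect to be the main obstacle is the opening identification: one must recognize that the forward pair $(X,v)$ is \emph{literally} the backward adapted solution attached to the random terminal value $\xi=X(T)$, so that the two running costs agree term by term and only the boundary contribution $\lan Gx,x\ran$ has to be accounted for. Once this structural identity and the representation \rf{J-rep} are available, the rest is a routine coercivity argument: Young's inequality to handle the cross term, the largeness of $\l$ to dominate $\Lan\cC\xi,\xi\Ran$, and the a priori BSDE estimate to recover coercivity in $v$. The roles of the two hypotheses are then transparent: $x=0$ removes the $\lan Gx,x\ran$ term in the first assertion, whereas $G=0$ removes it for arbitrary $x$ in the second.
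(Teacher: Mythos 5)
Your proof is correct, and it rests on the same structural insight as the paper's --- namely that the forward pair $(X,v)$, viewed against the terminal value $\xi=X(T)$, \emph{is} the adapted solution of the backward state equation, so the forward cost can be rewritten through the backward one --- but your execution of the coercivity transfer is genuinely different. The paper does not form your exact identity $\cJ_\l(x;u,v)=\l\,\dbE|X(T)|^2+J(X(T);u)-\lan Gx,x\ran$; instead it decomposes the backward solution as $X=Y+Y_0$, $v=Z+Z_0$ (zero-terminal part with control $u$, plus $\xi$-terminal part with zero control), isolates $J(0;u)$ so that \ref{ass:A3} applies directly, and then estimates all the cross terms by hand via Cauchy--Schwarz with a tunable parameter $\m$ and the estimate \rf{GUJI}, finally choosing $\m=2K^2/\d$ and an explicit $\l_0$. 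You avoid that bookkeeping by invoking the Hilbert-space representation \rf{J-rep} from \autoref{sec:Hilbert}: the coercivity of $\cA$ is exactly \ref{ass:A3}, and the boundedness of $\cB$ and $\cC$ (already established there) packages in one stroke what the paper's estimates \rf{guji-1}--\rf{guji-3} do concretely, with Young's inequality and largeness of $\l$ finishing the quadratic part; your use of \rf{GUJI} applied directly to $(X,v)$ to recover the $\|v\|^2$ term is likewise a one-line replacement for the paper's inequality \rf{E|X(T)|>|u|+|v|}, which needs the decomposition. A further small gain of your route is that the two cases ($x=0$ with general $G$, and $G=0$ with general $x$) are handled uniformly, since both simply kill the term $\lan Gx,x\ran$ in the identity, whereas the paper must treat the term $\lan GY(0),Y(0)\ran$ separately via \rf{guji-3}. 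What the paper's hands-on version buys in exchange is explicit, traceable constants ($\rho=\d/8K$, $\l_0=\d/4+K+K^2(\m+1)$) expressed only through the data $\d$ and $K$, rather than through the abstract operator norms $\|\cB\|$, $\|\cC\|$.
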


\begin{proof}
Fix $x\in\dbR^n$ and $(u,v)\in\sU\times\sV$, and let $X$ be the solution of \rf{FLQ:state}.
We see that the random variable $\xi\deq X(T)$ belongs to the space $L^2_{\cF_T}(\Om;\dbR^n)$.
Denote by $(Y_0,Z_0)$ and $(Y,Z)$ the adapted solutions to
\begin{equation*}
\left\{\begin{aligned}
dY_0(t) &= [A(t)Y_0(t) + C(t)Z_0(t)]dt + Z_0(t)dW(t), \\
 Y_0(T) &= \xi,
\end{aligned}\right.
\end{equation*}
and
\begin{equation*}
\left\{\begin{aligned}
dY(t) &= [A(t)Y(t) + B(t)u(t) + C(t)Z(t)]dt + Z(t)dW(t), \\
 Y(T) &= 0,
\end{aligned}\right.
\end{equation*}
respectively. Regarding $(X(t),v(t))$ as the variables for a BSDE with terminal $\xi$,
then we have by the uniqueness of an adapted solution that
$$ X(t) = Y(t)+Y_0(t), \q v(t) = Z(t)+Z_0(t); \q t\in[0,T]. $$
For simplicity, we introduce the following the notation:
\begin{align*}
M(t) = \begin{pmatrix} Q(t) & S_1^\top(t) & S_2^\top(t)\\
                     S_1(t) & R_{11}(t)   & R_{12}(t)  \\
                     S_2(t) & R_{21}(t)   & R_{22}(t)  \end{pmatrix}, \q
\a(t) = \begin{pmatrix}  Y(t) \\   Z(t) \\ u(t)\end{pmatrix}, \q
\b(t) = \begin{pmatrix}Y_0(t) \\ Z_0(t) \\ 0   \end{pmatrix}.
\end{align*}
Then we have
$$ J(0;u)= \dbE\bigg\{\lan GY(0),Y(0)\ran +\int_0^T\lan M(t)\a(t),\a(t)\ran dt\bigg\} $$
and hence
\begin{align}\label{cJ-guji}
\cJ_\l(x;u,v)
&= \dbE\bigg\{\l|X(T)|^2 + \int_0^T\lan M(t)[\a(t)+\b(t)],\a(t)+\b(t)\ran dt \bigg\} \nn\\
&= J(0;u) + \dbE\bigg\{\l|X(T)|^2 -\lan GY(0),Y(0)\ran \nn\\
&~\hp{=} +\int_0^T\lan M(t)\b(t),\b(t)\ran dt + 2\int_0^T\lan M(t)\a(t),\b(t)\ran dt \bigg\}.
\end{align}
Since the weighting matrices in the cost functional are bounded, we can chose a constant $K\ges1$
such that $|M(t)|\les K$ for a.e. $t\in[0,T]$. Thus, by the Cauchy--Schwarz inequality we have
\begin{align}\label{guji-1}
&\lt|\dbE\lt\{\int_0^T\lan M(t)\b(t),\b(t)\ran dt + 2\int_0^T\lan M(t)\a(t),\b(t)\ran dt \rt\}\rt|\nn\\
&\q\les K\lt\{\dbE\int_0^T|\b(t)|^2dt + 2\dbE\int_0^T|\a(t)||\b(t)| dt \rt\} \nn\\
&\q \les K\lt\{(\m+1)\,\dbE\int_0^T|\b(t)|^2dt + {1\over\m}\,\dbE\int_0^T|\a(t)|^2 dt\rt\},
\end{align}
where $\m>0$ is a constant to be chosen later.
If we choose $K\ges1$ large enough (still independent of $X(T)$ and $(u,v)$), then according to \autoref{lmm:Sun2019},
\begin{align}\label{guji-2}
\dbE\int_0^T|\a(t)|^2 dt
\les K\dbE\int_0^T|u(t)|^2dt, \q
\dbE\int_0^T|\b(t)|^2dt
\les K\dbE|X(T)|^2,
\end{align}
and for the case $x=0$, we have
\begin{align}\label{guji-3}
|\lan GY(0),Y(0)\ran| = |\lan GY_0(0),Y_0(0)\ran| \les K\dbE|X(T)|^2.
\end{align}
Combining \rf{guji-1} and \rf{guji-2}, we obtain from \rf{cJ-guji} that
\begin{align}\label{cJ:2020-11-8}
\cJ_\l(x;u,v)
&\ges J(0;u) -\lan GY(0),Y(0)\ran +[\l-K^2(\m+1)]\dbE|X(T)|^2 - {K^2\over\m}\,\dbE\int_0^T|u(t)|^2dt \nn\\
&\ges \bigg(\d \!-\! {K^2\over\m}\bigg)\dbE\!\int_0^T\!|u(t)|^2dt + [\l \!-\! K^2(\m\!+\!1)]\dbE|X(T)|^2 - \lan GY(0),Y(0)\ran.
\end{align}
From \rf{guji-2} we have
\begin{align*}
\dbE\int_0^T|v(t)|^2dt
&= \dbE\int_0^T|Z(t)+Z_0(t)|^2dt \les 2\lt[\dbE\int_0^T|Z(t)|^2dt+\dbE\int_0^T|Z_0(t)|^2dt\rt] \\
&\les 2\lt[\dbE\int_0^T|\a(t)|^2dt+\dbE\int_0^T|\b(t)|^2dt\rt] \\
&\les 2K\lt[\dbE|X(T)|^2+\dbE\int_0^T|u(t)|^2dt\rt],
\end{align*}
which implies that
\begin{align}\label{E|X(T)|>|u|+|v|}
\dbE|X(T)|^2 \ges {1\over2K}\,\dbE\int_0^T|v(t)|^2dt - \dbE\int_0^T|u(t)|^2dt.
\end{align}
Taking $\m={2K^2\over\d}$ and substituting \rf{E|X(T)|>|u|+|v|} into \rf{cJ:2020-11-8},
we see that when $\l\ges \l_0\deq {\d\over4}+K+K^2(\m+1)$,
\begin{align}\label{Jl(x;u,v);guji}
\cJ_\l(x;u,v)
&\ges {\d\over2}\,\dbE\int_0^T|u(t)|^2dt + \({\d\over4}+K\)\,\dbE|X(T)|^2 -\lan GY(0),Y(0)\ran \nn\\
&\ges {\d\over4}\,\dbE\int_0^T|u(t)|^2dt + {\d\over 8K}\,\dbE\int_0^T|v(t)|^2dt + \[K\dbE|X(T)|^2 -\lan GY(0),Y(0)\ran\] \nn\\
&\ges {\d\over8K}\,\dbE\int_0^T\[|u(t)|^2+|v(t)|^2\]dt + \[K\dbE|X(T)|^2 -\lan GY(0),Y(0)\ran\].
\end{align}
When $x=0$, by using \rf{guji-3} we further obtain
\begin{align*}
\cJ_\l(0;u,v) \ges {\d\over8K}\,\dbE\int_0^T\[|u(t)|^2+|v(t)|^2\]dt .
\end{align*}
If $G=0$, we obtain from \rf{Jl(x;u,v);guji} that
$$ \cJ_\l(x;u,v)\ges {\d\over8K}\,\dbE\int_0^T\[|u(t)|^2+|v(t)|^2\]dt $$
for all $(u,v)\in\sU\times\sV$ and all $x\in\dbR^n$.
\end{proof}

Combining \autoref{thm:FLQ-convex} and \autoref{lmm:SLQ-main}, we get the following corollaries.

\begin{corollary}\label{crllry:Vl(x)>0}
Under the assumptions of \autoref{thm:FLQ-convex}, Problem (FSLQ)$_\l$ is uniquely solvable for $\l\ges\l_0$.
If, in addition, $G=0$, then for $\l\ges\l_0$ the value function $\cV_\l$ satisfies
$$ \cV_\l(x)\ges0, \q\forall x\in\dbR^n. $$
\end{corollary}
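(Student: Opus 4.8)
The plan is to recognize Problem (FSLQ)$_\l$ as a special instance of the abstract forward problem treated in \autoref{lmm:SLQ-main}, with the pair $(u,v)$ regarded as a single $\dbR^{m+n}$-valued control, and then to combine that lemma with \autoref{thm:FLQ-convex}.

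First I would rewrite Problem (FSLQ)$_\l$ in the form \rf{state:SLQ}--\rf{cost:SLQ}. Setting the control to be $w=\begin{pmatrix}v\\u\end{pmatrix}\in\dbR^{n+m}$, the state equation \rf{FLQ:state} corresponds to the data $\cA=A$, $\cC=0$, $\cB=(C,B)$ and $\cD=(I_n,0)$, the last chosen so that $\cD w=v$ reproduces the diffusion term $v\,dW$; the cost \rf{FLQ:cost} corresponds to $\cG=\l I_n$, $\cQ=Q$, and control-weight and cross-weight blocks read off from the $(v,u)$-rows and columns of the matrix $M$. By \ref{ass:A1}--\ref{ass:A2} all of these coefficients lie in the appropriate $L^\i$ spaces, so the hypotheses on the data in \autoref{lmm:SLQ-main} are satisfied (with $m$ there replaced by $m+n$).

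Next I would invoke \autoref{thm:FLQ-convex}: for $\l\ges\l_0$, estimate \rf{cJ-tu} reads $\cJ_\l(0;u,v)\ges\rho\,\dbE\int_0^T[|u(t)|^2+|v(t)|^2]dt=\rho\|w\|^2$, which is precisely the uniform convexity condition \rf{FSLQ:cJ-uni-tu} (with $\a=\rho$) required by \autoref{lmm:SLQ-main}. Hence the associated Riccati equation admits a unique solution $\cP_\l$ with $\cR+\cD^\top\cP_\l\cD\gg0$, and for each initial state $x$ a unique open-loop optimal control exists; in particular Problem (FSLQ)$_\l$ is uniquely solvable for $\l\ges\l_0$, which is the first assertion.

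Finally, for the case $G=0$ I would use the stronger estimate \rf{cJ-tu2}, which gives $\cJ_\l(x;u,v)\ges\rho\,\dbE\int_0^T[|u(t)|^2+|v(t)|^2]dt\ges0$ for every $(u,v)\in\sU\times\sV$ and every $x\in\dbR^n$; taking the infimum over $(u,v)$ yields $\cV_\l(x)\ges0$ at once. Equivalently, \autoref{lmm:SLQ-main} gives $\cV_\l(x)=\lan\cP_\l(0)x,x\ran$, and nonnegativity of every cost value forces $\cP_\l(0)\ges0$. I do not expect a genuine obstacle here: the whole argument is an application of the two preceding results, and the only point requiring care is the bookkeeping in matching the two-component control $(u,v)$ to the single control of the abstract formulation, in particular verifying that $\cD$ is the block picking out $v$, so that the term $\cD^\top\cP_\l\cD$ appearing in the Riccati equation is correctly formed.
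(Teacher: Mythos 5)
Your proposal is correct and follows essentially the same route as the paper: the paper's proof likewise derives unique solvability from the uniform convexity estimate \rf{cJ-tu} together with \autoref{lmm:SLQ-main}, and obtains $\cV_\l(x)\ges0$ for $G=0$ directly from \rf{cJ-tu2}. The only difference is that you spell out the bookkeeping of identifying the pair $(u,v)$ with a single $\dbR^{n+m}$-valued control (with $\cB=(C,B)$, $\cD=(I_n,0)$, $\cC=0$, $\cG=\l I_n$), which the paper leaves implicit here but uses explicitly in the same form in the proof of \autoref{prop:Pl-property}.
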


\begin{proof}
The unique solvability of Problem (FSLQ)$_\l$ follows from \rf{cJ-tu} and \autoref{lmm:SLQ-main}.
When $G=0$, \rf{cJ-tu2} implies that $\cV_\l(x)\ges0$ for all $\l\ges\l_0$ and all $x\in\dbR^n$.
\end{proof}

\begin{corollary}\label{crllry:Pl-exist}
Under the assumptions of \autoref{thm:FLQ-convex}, for $\l\ges\l_0$ the Riccati equation
\begin{equation}\label{Ric:P}
\left\{\begin{aligned}
&\dot P_\l + P_\l A+A^\top P_\l + Q \\
&\hp{P_\l} -\begin{pmatrix}C^\top P_\l + S_1 \\
                           B^\top P_\l + S_2 \end{pmatrix}^{\!\top}\!
            \begin{pmatrix}R_{11}+P_\l & R_{12} \\
                           R_{21}      & R_{22} \end{pmatrix}^{-1}\!
            \begin{pmatrix}C^\top P_\l + S_1 \\
                           B^\top P_\l + S_2 \end{pmatrix} =0,\\
&P_\l(T)= \l I,
\end{aligned}\right.
\end{equation}
admits a unique solution $P_\l\in C([0,T];\dbS^n$) such that
\begin{equation}\label{Ric:R+DPD>0}
  \begin{pmatrix}R_{11}+P_\l & R_{12} \\ R_{21} & R_{22}\end{pmatrix}\gg0.
\end{equation}
Moreover, $\cV_\l(x) = \lan P_\l(0)x,x\ran$ for all $x\in\dbR^n$.
\end{corollary}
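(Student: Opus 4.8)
The plan is to recognize \autoref{crllry:Pl-exist} as a direct application of \autoref{lmm:SLQ-main} to Problem (FSLQ)$_\l$, after identifying its data with the abstract data $(\cA,\cB,\cC,\cD,\cG,\cQ,\cS,\cR)$ appearing in \rf{state:SLQ}--\rf{cost:SLQ}. The state equation \rf{FLQ:state} carries the control pair $(u,v)$, where $v$ plays the role of the diffusion control; comparing with \rf{state:SLQ}, the combined control is $\binom{v}{u}$, the drift coefficient of the control is $(C,B)$, and the diffusion coefficient is $(I_n,0)$. Reading off \rf{FLQ:cost}, the terminal weight is $\cG=\l I_n$, the state weight is $\cQ=Q$, and the cross/control weights assemble into
\begin{align*}
\cS = \begin{pmatrix}S_1 \\ S_2\end{pmatrix}, \q
\cR = \begin{pmatrix}R_{11} & R_{12} \\ R_{21} & R_{22}\end{pmatrix}.
\end{align*}

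Next I would substitute this data into the general Riccati equation \rf{Ric:SLQ}. The key point is that $\cD^\top\cP\cD$ collapses nicely: since the diffusion coefficient of the control is $\binom{I_n}{0}$, one computes $\cD^\top P_\l\cD=\mathrm{diag}(P_\l,0)$, so that $\cR+\cD^\top P_\l\cD$ becomes exactly the block matrix $\left(\begin{smallmatrix}R_{11}+P_\l & R_{12}\\ R_{21} & R_{22}\end{smallmatrix}\right)$ appearing in \rf{Ric:R+DPD>0}. Likewise $\cP\cB+\cC^\top\cP\cD+\cS^\top$ reduces to $\binom{C^\top P_\l+S_1}{B^\top P_\l+S_2}^{\top}$, matching the gain term in \rf{Ric:P}. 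A short verification shows that the specialized \rf{Ric:SLQ} is precisely \rf{Ric:P} with terminal condition $P_\l(T)=\cG=\l I$.

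To invoke \autoref{lmm:SLQ-main} I need the hypothesis \rf{FSLQ:cJ-uni-tu}, namely $\cJ_\l(0;u,v)\ges\a(\|u\|^2+\|v\|^2)$ for some $\a>0$. This is exactly the uniform-convexity estimate \rf{cJ-tu} furnished by \autoref{thm:FLQ-convex} under assumptions \ref{ass:A1}--\ref{ass:A3}, valid for all $\l\ges\l_0$ with constant $\rho=\d/(8K)>0$. Therefore \autoref{lmm:SLQ-main} applies and yields a unique $P_\l\in C([0,T];\dbS^n)$ solving \rf{Ric:P} with $\cR+\cD^\top P_\l\cD\gg0$, i.e.\ \rf{Ric:R+DPD>0}, together with the value formula $\cV_\l(x)=\lan P_\l(0)x,x\ran$. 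The argument is essentially a bookkeeping exercise, so the only real obstacle is the careful matrix identification, especially the block structure of $\cD$ and the resulting collapse $\cD^\top P_\l\cD=\mathrm{diag}(P_\l,0)$; once that correspondence is set up correctly, the conclusion transfers verbatim from \autoref{lmm:SLQ-main}, and I expect no further difficulty.
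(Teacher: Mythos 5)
Your proposal is correct and is exactly the paper's route: the paper's proof of this corollary is the one-line observation that it "follows directly from \rf{cJ-tu} and \autoref{lmm:SLQ-main}," and your write-up simply makes explicit the data identification (combined control $\binom{v}{u}$, $\cA=A$, $\cB=(C,B)$, $\cC=0$, $\cD=(I_n,0)$, $\cG=\l I$, and the block forms of $\cS$, $\cR$) that this application tacitly requires. Your verification that $\cD^\top P_\l\cD=\mathrm{diag}(P_\l,0)$ collapses $\cR+\cD^\top\cP\cD$ into the block matrix of \rf{Ric:R+DPD>0} is precisely the bookkeeping the paper leaves to the reader.
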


\begin{proof}
It follows directly from \rf{cJ-tu} and \autoref{lmm:SLQ-main}.
\end{proof}

We conclude this section with a remark on the weighting matrix $R_{22}$ of the control process.

\begin{remark}\label{remark:R22}
Clearly, \rf{Ric:R+DPD>0} implies that $R_{22}\gg0$. Thus, in order for the cost functional of
Problem (BSLQ) to be uniformly positive (or equivalently, in order for \ref{ass:A3} to hold),
the weighting matrix $R_{22}(t)$ must be positive definite uniformly in $t\in[0,T]$.
This is quite different from the forward stochastic LQ optimal control problem, in which the
uniform positivity of the weighting matrix for the control is neither sufficient nor necessary
for the uniform positivity of the cost functional.
In a similar manner, we can show that in order for the cost functional of Problem (BSLQ) to be
positive, the weighting matrix $R_{22}(t)$ must be positive definite for a.e. $t\in[0,T]$.
\end{remark}

\section{Construction of optimal controls}\label{sec:construction}

In this section we construct the optimal control of Problem (BSLQ) under the uniform positivity condition \ref{ass:A3}.
As mentioned in \autoref{sec:connection}, once the case of \ref{ass:A3} is solved, we can develop an $\e$-approximation
scheme that is asymptotically optimal for the general case, thanks to \autoref{thm:A+eI}.

\ms

First, we observe that the uniform positivity condition \ref{ass:A3} implies $R_{22}\gg0$ (\autoref{remark:R22}).
This enables us to simplify Problem (BSLQ) by assuming
\begin{align}\label{G=Q=R12=0}
G=0, \q Q(t)=0, \q R_{12}(t)=R_{21}^\top(t)=0; \q\forall t\in[0,T].
\end{align}
In fact, using the transformations
\begin{equation}\label{transfer}
\begin{aligned}
\sS_1 &= S_1-R_{12}R_{22}^{-1}S_2, \q & \sR_{11} &= R_{11}-R_{12}R_{22}^{-1}R_{21}, \\
\sC   &= C-BR_{22}^{-1}R_{21},     \q &        v &= u+R_{22}^{-1}R_{21}Z,
\end{aligned}
\end{equation}
the original Problem (BSLQ) is equivalent to the backward stochastic LQ optimal control problem with state equation
\begin{equation}\label{state:sY}
\left\{\begin{aligned}
dY(t) &= [A(t)Y(t)+B(t)v(t)+\sC(t)Z(t)]dt + Z(t)dW(t), \\
 Y(T) &= \xi,
\end{aligned}\right.
\end{equation}
and cost functional
\begin{align}\label{cost:sJ}
\sJ(\xi;v) &= \dbE\bigg\{\lan GY(0),Y(0)\ran + \!\int_0^T\!\Blan
          \begin{pmatrix}  Q(t) & \!\!\sS_1^\top(t) & \!\!S_2^\top(t)\\
                       \sS_1(t) & \!\!\sR_{11}(t)   & \!\!0          \\
                         S_2(t) & \!\!0             & \!\!R_{22}(t)  \end{pmatrix}\!\!
          \begin{pmatrix}Y(t) \\ Z(t) \\ v(t)\end{pmatrix}\!,\!
          \begin{pmatrix}Y(t) \\ Z(t) \\ v(t)\end{pmatrix}\Bran dt \bigg\}.
\end{align}
Furthermore, letting $H\in C([0,T];\dbS^n)$ be the unique solution to the linear ordinary differential equation (ODE, for short)
\begin{equation}\label{Ric:H(0)=G}
\left\{\begin{aligned}
  &\dot H(t) + H(t)A(t) + A(t)^\top H(t) + Q(t) =0, \q t\in[0,T],\\
  &H(0)= G,
\end{aligned}\right.
\end{equation}
and then applying the integration by parts formula to $t\mapsto\lan H(t)Y(t),Y(t)\ran$,
where $Y$ is the state process determined by \rf{state:sY}, we obtain
\begin{align*}
&\dbE\lan H(T)\xi,\xi\ran-\dbE\lan GY(0),Y(0)\ran \\
&\q= \dbE\int_0^T\[\lan(\dot H+HA+A^\top H)Y,Y\ran + 2\lan B^\top HY,v\ran + 2\lan\sC^\top HY,Z\ran + \lan HZ,Z\ran\]dt \\
&\q= \dbE\int_0^T\[-\lan QY,Y\ran + 2\lan B^\top HY,v\ran + 2\lan\sC^\top HY,Z\ran + \lan HZ,Z\ran\]dt \\
&\q= \dbE\int_0^T\Blan\begin{pmatrix} -Q & H\sC & HB \\
                              \sC^\top H & H    & 0  \\
                                B^\top H & 0    & 0  \end{pmatrix}\!
                      \begin{pmatrix}Y \\ Z \\ v\end{pmatrix},
                      \begin{pmatrix}Y \\ Z \\ v\end{pmatrix}\Bran dt.
\end{align*}
Substituting for $\dbE\lan GY(0),Y(0)\ran$ in the cost functional \rf{cost:sJ} yields
\begin{align*}
\sJ(\xi;v) &=\dbE\int_0^T\!\Blan
          \begin{pmatrix}  0 & \!\!(S_{^1}^{_H})^\top & \!\!(S_{^2}^{_H})^\top\\
                 S_{^1}^{_H} & \!\!    R_{^{11}}^{_H} & \!\!0  \\
                 S_{^2}^{_H} & \!\!                 0 & \!\!R_{22}  \end{pmatrix}\!\!
          \begin{pmatrix}Y \\ Z \\ v\end{pmatrix}\!,\!
          \begin{pmatrix}Y \\ Z \\ v\end{pmatrix}\Bran dt -\dbE\lan H(T)\xi,\xi\ran,
\end{align*}
where
\begin{align}\label{SH+RH}
 S_{^1}^{_H} = \sS_1+\sC^\top H, \q S_{^2}^{_H} = S_2+B^\top H, \q R_{^{11}}^{_H} = \sR_{11}+H.
\end{align}
Thus, for a given terminal state $\xi$, minimizing $J(\xi;u)$ subject to \rf{state}
is equivalent to minimizing the cost functional
\begin{align}\label{cost-H}
J^{_H}(\xi;v) &= \dbE\int_0^T\!\Blan
          \begin{pmatrix}  0 & \!\!(S_{^1}^{_H})^\top & \!\!(S_{^2}^{_H})^\top\\
                 S_{^1}^{_H} & \!\!    R_{^{11}}^{_H} & \!\!0 \\
                 S_{^2}^{_H} & \!\!                 0 & \!\!R_{22} \end{pmatrix}\!\!
          \begin{pmatrix}Y \\ Z \\ v\end{pmatrix}\!,\!
          \begin{pmatrix}Y \\ Z \\ v\end{pmatrix}\Bran dt,
\end{align}
subject to the state equation \rf{state:sY}.
Therefore, in the rest of this section we may assume without loss of generality that \rf{G=Q=R12=0} holds.
The general case will be discussed in \autoref{sec:conclusion}.

\ms

Observe that in the case of \rf{G=Q=R12=0}, the Riccati equation \rf{Ric:P} becomes
\begin{equation}\label{Ric:P(Q=0)}
\left\{\begin{aligned}
&\dot P_\l + P_\l A+A^\top P_\l
           - \begin{pmatrix}C^\top P_\l + S_1 \\
                            B^\top P_\l + S_2 \end{pmatrix}^{\!\top}\!
             \begin{pmatrix}R_{11}+P_\l & 0 \\
                                      0 & R_{22} \end{pmatrix}^{-1}\!
             \begin{pmatrix}C^\top P_\l + S_1 \\
                            B^\top P_\l + S_2 \end{pmatrix} =0, \\
&P_\l(T)= \l I.
\end{aligned}\right.
\end{equation}

\begin{proposition}\label{prop:Pl-property}
Let \ref{ass:A1}--\ref{ass:A3} and \rf{G=Q=R12=0} hold. Then for $\l\ges\l_0$,
the solution of \rf{Ric:P(Q=0)} satisfies
\begin{equation}\label{Ric:P(t)>0}
P_\l(t)\ges0, \q\forall t\in[0,T].
\end{equation}
Moreover, for every $\l_2>\l_1\ges\l_0$, we have
\begin{equation}\label{Pl:increase}
  P_{\l_2}(t) > P_{\l_1}(t), \q\forall t\in[0,T].
\end{equation}
\end{proposition}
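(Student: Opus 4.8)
The plan is to establish both claims via the probabilistic representation $\cV_\l(x)=\lan P_\l(0)x,x\ran$ from \autoref{crllry:Pl-exist}, combined with the key simplification $G=0$ in \rf{G=Q=R12=0}. For the nonnegativity \rf{Ric:P(t)>0}, I would first note that under \rf{G=Q=R12=0} we have $G=0$, so \autoref{crllry:Vl(x)>0} gives $\cV_\l(x)\ges0$ for all $x\in\dbR^n$ and all $\l\ges\l_0$. Since $\cV_\l(x)=\lan P_\l(0)x,x\ran$, this immediately yields $P_\l(0)\ges0$. To upgrade this to $P_\l(t)\ges0$ for every $t\in[0,T]$, I would re-run Problem (FSLQ)$_\l$ on the shortened horizon $[t,T]$: the uniform convexity estimate \rf{cJ-tu2} (which required exactly $G=0$) holds on $[t,T]$ as well, so the same argument gives that the value function of the restricted problem at time $t$ is nonnegative, and this value is precisely $\lan P_\l(t)x,x\ran$. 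Hence $P_\l(t)\ges0$ for all $t$.

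For the strict monotonicity \rf{Pl:increase}, the natural route is again comparison of value functions. Fix $\l_2>\l_1\ges\l_0$ and an initial state $x$, and let $(u^*,v^*)$ be the optimal control for Problem (FSLQ)$_{\l_1}$ at $x$, with corresponding state $X^*$. Since the only difference between $\cJ_{\l_2}$ and $\cJ_{\l_1}$ is the terminal coefficient $\l|X(T)|^2$, I would write
\begin{align*}
\lan P_{\l_2}(0)x,x\ran
  = \cV_{\l_2}(x)
  \les \cJ_{\l_2}(x;u^*,v^*)
  = \cJ_{\l_1}(x;u^*,v^*) + (\l_2-\l_1)\dbE|X^*(T)|^2
  = \lan P_{\l_1}(0)x,x\ran + (\l_2-\l_1)\dbE|X^*(T)|^2.
\end{align*}
This already gives the weak inequality $P_{\l_2}(0)\ges P_{\l_1}(0)$. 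To obtain the strict inequality I would argue that $\dbE|X^*(T)|^2>0$ whenever $x\ne0$: if $X^*(T)=0$ a.s., then the terminal penalty vanishes and $(u^*,v^*)$ is simultaneously optimal and gives the same cost under both $\l_1$ and $\l_2$; combined with the uniform convexity and the structure of the dynamics, one shows a nonzero initial state cannot be steered to a zero terminal state while remaining optimal, contradicting $x\ne0$. As before, repeating the comparison on $[t,T]$ promotes the strict inequality from $t=0$ to every $t\in[0,T]$.

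The main obstacle will be the strict part of \rf{Pl:increase}, specifically ruling out $\dbE|X^*(T)|^2=0$ for $x\ne0$. A cleaner alternative that I would actually prefer is a direct Riccati comparison: setting $\Delta=P_{\l_2}-P_{\l_1}$, subtract the two copies of \rf{Ric:P(Q=0)} and, using the algebraic identity for the difference of the quadratic feedback terms (completing the square around the two feedback gains), show that $\Delta$ satisfies a linear matrix ODE of Lyapunov type,
\begin{equation*}
\dot\Delta + \Delta\widehat A + \widehat A^\top\Delta + (\text{nonnegative terms}) = 0,
\qquad \Delta(T)=(\l_2-\l_1)I>0,
\end{equation*}
where $\widehat A$ incorporates the optimal feedback of the $\l_2$-problem and the nonnegative terms arise from the gap in the $Z$-feedback weighted by $(R_{11}+P_{\l_2})^{-1}>0$ (this is where positivity \rf{Ric:P(t)>0} and the invertibility \rf{Ric:R+DPD>0} are essential). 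Solving this backward via the associated transition matrix gives $\Delta(t)>0$ on all of $[0,T]$ because the terminal condition is strictly positive and the zeroth-order forcing is nonnegative. The delicate point in this approach is handling the $Z$-block, since $P_\l$ itself appears inside the inverse $(R_{11}+P_\l)^{-1}$, so the difference of feedback terms must be reorganized carefully to isolate a manifestly nonnegative quadratic in $\Delta$; I expect this algebraic rearrangement to be the crux of the argument.
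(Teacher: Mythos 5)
Your argument for \rf{Ric:P(t)>0} is essentially correct, though it takes a different route from the paper: you restrict Problem (FSLQ)$_\l$ to $[t,T]$ and reuse \rf{cJ-tu2}, whereas the paper, once $P_\l(0)\ges0$ is known, propagates nonnegativity forward in time through the purely deterministic integral representation $P_\l(t)=[\F^{-1}(t)]^\top\big[P_\l(0)+\int_0^t\F(s)^\top\cQ_\l(s)\F(s)\,ds\big]\F^{-1}(t)$, where $\cQ_\l\ges0$ by \rf{Ric:R+DPD>0}; this needs no subinterval analysis. One caveat on your route: the claim that \rf{cJ-tu2} survives restriction to $[t,T]$ is justified by extending $(u,v)$ by zero on $[0,t)$ and starting from $\F(t)^{-1}x$, and this uses $Q=0$ in addition to $G=0$ (otherwise the cost on $[0,t)$ contributes an uncontrolled, possibly negative $Q$-term); under \rf{G=Q=R12=0} this is harmless, but it should be said.

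The strict monotonicity \rf{Pl:increase} is where the proposal has genuine gaps. First, your value-function comparison runs in the wrong direction: inserting the $\l_1$-optimal pair into $\cJ_{\l_2}$ gives $\cV_{\l_2}(x)\les\cV_{\l_1}(x)+(\l_2-\l_1)\dbE|X^*(T)|^2$, an \emph{upper} bound on $\cV_{\l_2}$, which does not yield $P_{\l_2}(0)\ges P_{\l_1}(0)$. The weak inequality instead follows from $\cJ_{\l_2}(x;u,v)\ges\cJ_{\l_1}(x;u,v)$ for \emph{every} control pair, and strictness would require inserting the $\l_2$-optimal pair into $\cJ_{\l_1}$ and then ruling out $X^*(T)=0$ a.s., via the invertibility of the flow of the linear closed-loop SDE (your sketch of this step is too vague to check). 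Second, your preferred Riccati-comparison route conceals a circularity. Writing $\cP=P_{\l_2}-P_{\l_1}$ and expanding around the $\l_2$-feedback, the difference equation becomes, in the notation of \rf{P2-P1},
\begin{equation*}
\dot\cP+\cP\hat A+\hat A^\top\cP+\cP\cB(\cR+\cD^\top\cP\cD)^{-1}\cB^\top\cP
+\cS^\top\big[\cR^{-1}-(\cR+\cD^\top\cP\cD)^{-1}\big]\cS=0,\qq
\cP(T)=(\l_2-\l_1)I,
\end{equation*}
with $\hat A=A-\cB(\cR+\cD^\top\cP\cD)^{-1}(\cB^\top\cP+\cS)$. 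The first forcing term is nonnegative, but the second is nonnegative \emph{if and only if} $\cR+\cD^\top\cP\cD\ges\cR$, i.e., if and only if $\cP\ges0$ --- exactly what is being proved. So the forcing is not ``manifestly nonnegative''; you would have to bootstrap (first prove the weak inequality by the corrected comparison, then feed $\cP\ges0$ into the Lyapunov representation to get strict positivity). The paper sidesteps both difficulties at once: it observes that $\cP$ satisfies \rf{P2-P1}, which is precisely the FSLQ Riccati equation \rf{Ric:SLQ} with data $\cG=(\l_2-\l_1)I>0$, $\cR=\begin{pmatrix}R_{11}+P_{\l_1}&0\\0&R_{22}\end{pmatrix}\gg0$, and $\cQ=\cS^\top\cR^{-1}\cS$, so that the standard condition \rf{FSLQ:standard-condition} holds (and \rf{Ric:R+DPD>0} for $\l_2$ identifies $\cP$ with the solution furnished by \autoref{lmm:SLQ-main}); then \autoref{crllry:standard-cndtn} delivers $\cP(t)>0$ outright, the strictness there being established by exactly the ``cannot steer $x\ne0$ to a zero terminal state under linear closed-loop dynamics'' argument that your first route gestures at but does not complete.
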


\begin{proof}
Consider Problem (FSLQ)$_\l$ for $\l\ges\l_0$. Since $G=0$, we see from \autoref{crllry:Vl(x)>0} and \autoref{crllry:Pl-exist} that
$$ \lan P_\l(0)x,x\ran = \cV_\l(x) \ges 0, \q\forall x\in\dbR^n, $$
and hence $P_\l(0)\ges0$. With the notation
\begin{align*}
 \cQ_\l = \begin{pmatrix}C^\top P_\l + S_1 \\ B^\top P_\l + S_2 \end{pmatrix}^{\top}
          \begin{pmatrix}R_{11}+P_\l & 0 \\ 0 & R_{22} \end{pmatrix}^{-1}
          \begin{pmatrix}C^\top P_\l + S_1 \\ B^\top P_\l + S_2 \end{pmatrix}
\end{align*}
and with $\F$ denoting the solution to the matrix ODE
\begin{equation*}
\left\{\begin{aligned}
\dot\F(t) &= A(t)\F(t), \q t\in[0,T],\\
    \F(0) &= I_n,
\end{aligned}\right.
\end{equation*}
we can rewrite \rf{Ric:P(Q=0)} in the integral form
$$ P_\l(t) = \big[\F^{-1}(t)\big]^\top\lt[P_\l(0) + \int_0^t \F(s)^\top\cQ_\l(s)\F(s) ds\rt]\F^{-1}(t), \q t\in[0,T].$$
This implies \rf{Ric:P(t)>0} because $P_\l(0)\ges0$ and $\cQ_\l(t)\ges0$ a.e. by \rf{Ric:R+DPD>0}.
To prove \rf{Pl:increase}, let us consider $\cP(t)= P_{\l_2}(t)-P_{\l_1}(t)$, which satisfies the following equation:
\begin{equation}\label{P2-P1}
\left\{\begin{aligned}
&\dot\cP + \cP A+A^\top\cP + \cQ - (\cP\cB+\cS^\top) (\cR+\cD^\top \cP\cD)^{-1} (\cB^\top\cP+\cS)=0, \\
&\cP(T)= (\l_2-\l_1)I,
\end{aligned}\right.
\end{equation}
where we have employed the notation
\begin{eqnarray*}
& \cB = (C, B), \q \cD = (I,0), \q
  \cR = \begin{pmatrix}R_{11}+P_{\l_1} & 0 \\ 0 & R_{22} \end{pmatrix}, \q
  \cS = \begin{pmatrix}C^\top P_{\l_1} + S_1 \\ B^\top P_{\l_1} + S_2 \end{pmatrix}, \\
& \cQ = \begin{pmatrix}C^\top P_{\l_1} + S_1 \\ B^\top P_{\l_1} + S_2 \end{pmatrix}^{\top}
        \begin{pmatrix}R_{11}+P_{\l_1} & 0 \\ 0 & R_{22} \end{pmatrix}^{-1}
        \begin{pmatrix}C^\top P_{\l_1} + S_1 \\ B^\top P_{\l_1} + S_2 \end{pmatrix}=\cS^{\top}\cR^{-1}\cS.
\end{eqnarray*}
Clearly, the matrices $\cG=(\l_2-\l_1)I>0$, $\cQ$, $\cS$, and $\cR$ satisfies the condition \rf{FSLQ:standard-condition},
so \autoref{crllry:standard-cndtn} implies \rf{Pl:increase}.
\end{proof}

For notational convenience we write for an $\dbS^n$-valued function $\Si:[0,T]\to\dbS^n$,
\begin{align*}
& \cB(t,\Si(t)) = B(t)+\Si(t)S_2(t)^\top, \\
& \cC(t,\Si(t)) = C(t)+\Si(t)S_1(t)^\top, \\
& \cR(t,\Si(t)) = I + \Si(t)R_{11}(t).
\end{align*}
When there is no risk for confusion we will frequently suppress the argument $t$ from
our notation and write $\cB(t,\Si(t))$, $\cC(t,\Si(t))$, and $\cR(t,\Si(t))$ as $\cB(\Si)$,
$\cC(\Si)$, and $\cR(\Si)$, respectively. In order to construct the optimal control of
Problem (BSLQ), we now introduce the following Riccati equation:
\begin{equation}\label{Ric:Sigma(Q=0)}
\left\{\begin{aligned}
&\dot\Si(t) - A(t)\Si(t) - \Si(t)A(t)^\top + \cB(t,\Si(t))[R_{22}(t)]^{-1}\cB(t,\Si(t))^\top \\
&\hp{\dot\Si(t)} +\cC(t,\Si(t))[\cR(t,\Si(t))]^{-1}\Si(t)\cC(t,\Si(t))^\top=0, \q t\in[0,T],\\
&\Si(T)= 0.
\end{aligned}\right.
\end{equation}

\begin{theorem}
Let \ref{ass:A1}--\ref{ass:A3} and \rf{G=Q=R12=0} hold. Then the Riccati equation \rf{Ric:Sigma(Q=0)}
admits a unique positive semidefinite solution $\Si\in C([0,T];\dbS^n)$ such that $\cR(\Si)$ is invertible
a.e. on $[0,T]$ and $\cR(\Si)^{-1}\in L^\i(0,T;\dbR^n)$.
\end{theorem}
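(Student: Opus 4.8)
The plan is to obtain $\Si$ as the limit, as $\l\to\i$, of the inverses $P_\l^{-1}$ of the solutions to the Riccati equation \rf{Ric:P(Q=0)}, exploiting the monotonicity and positivity already recorded in \autoref{prop:Pl-property}. First I would upgrade the semidefiniteness \rf{Ric:P(t)>0} to strict positivity: for $\l>\l_0$ one writes $P_\l(t)=[P_\l(t)-P_{\l_0}(t)]+P_{\l_0}(t)$, where the first bracket is positive definite by \rf{Pl:increase} and the second is positive semidefinite by \rf{Ric:P(t)>0}; hence $P_\l(t)>0$ for every $t\in[0,T]$ and every $\l>\l_0$, so that $\Si_\l\deq P_\l^{-1}$ is a well-defined, symmetric, positive definite function.

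Next I would derive the equation satisfied by $\Si_\l$. Differentiating $\Si_\l P_\l=I$ gives $\dot\Si_\l=-\Si_\l\dot P_\l\Si_\l$, so multiplying \rf{Ric:P(Q=0)} by $\Si_\l$ on both sides turns $\dot P_\l$ into $-\dot\Si_\l$ and $P_\l A+A^\top P_\l$ into $A\Si_\l+\Si_\l A^\top$. The algebraic heart is the three identities $\Si_\l(P_\l B+S_2^\top)=\cB(\Si_\l)$, $\Si_\l(P_\l C+S_1^\top)=\cC(\Si_\l)$, and $R_{11}+P_\l=P_\l\cR(\Si_\l)$ (whence $(R_{11}+P_\l)^{-1}=\cR(\Si_\l)^{-1}\Si_\l$), which convert the two quadratic terms of \rf{Ric:P(Q=0)} exactly into $\cB(\Si_\l)R_{22}^{-1}\cB(\Si_\l)^\top$ and $\cC(\Si_\l)\cR(\Si_\l)^{-1}\Si_\l\cC(\Si_\l)^\top$. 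Thus $\Si_\l$ solves \rf{Ric:Sigma(Q=0)} but with terminal value $\Si_\l(T)=P_\l(T)^{-1}=\l^{-1}I$.

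I would then send $\l\to\i$. Since $\l\mapsto P_\l(t)$ is increasing by \rf{Pl:increase}, $\l\mapsto\Si_\l(t)$ is decreasing and bounded below by $0$, hence converges pointwise to some $\Si(t)\ges0$ with $\Si(T)=0$. The crucial estimate is a uniform bound on $\cR(\Si_\l)^{-1}$: from $\cR(\Si_\l)^{-1}=(R_{11}+P_\l)^{-1}P_\l=I-(R_{11}+P_\l)^{-1}R_{11}$ together with $R_{11}+P_\l\ges R_{11}+P_{\l_0}\gg0$ (the last by \rf{Ric:R+DPD>0} in the simplified case), one gets $R_{11}+P_\l\ges cI$ for a constant $c>0$ independent of $\l$ and $t$, and therefore $|\cR(\Si_\l)^{-1}|\les 1+c^{-1}\|R_{11}\|_\i$ uniformly. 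Passing to the limit in $\cR(\Si_\l)\cR(\Si_\l)^{-1}=I$, where both factors converge pointwise (the second through the monotone limit of $(R_{11}+P_\l)^{-1}$), shows that $\cR(\Si)$ is invertible with $\cR(\Si)^{-1}\in L^\i(0,T;\dbR^{n\times n})$. Writing \rf{Ric:Sigma(Q=0)} in integral form $\Si_\l(t)=\l^{-1}I-\int_t^T(\cdots)ds$ and invoking dominated convergence (all coefficients are bounded and $\cR(\Si_\l)^{-1}$ is uniformly bounded) then yields that $\Si$ is continuous, is positive semidefinite, and solves \rf{Ric:Sigma(Q=0)}.

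Finally, uniqueness within the stated class follows from a backward Gronwall argument: on the set where $\cR(\Si)^{-1}$ is bounded the right-hand side of \rf{Ric:Sigma(Q=0)} is Lipschitz in $\Si$ (using $\cR(\Si_1)^{-1}-\cR(\Si_2)^{-1}=-\cR(\Si_1)^{-1}(\Si_1-\Si_2)R_{11}\cR(\Si_2)^{-1}$), so two continuous solutions with bounded $\cR^{-1}$ agreeing at $t=T$ must coincide. I expect the main obstacle to be precisely the uniform-in-$\l$ control of $\cR(\Si_\l)^{-1}$ and the attendant invertibility of $\cR(\Si)$ in the limit; everything else is either monotonicity bookkeeping or the routine matrix calculus converting \rf{Ric:P(Q=0)} into \rf{Ric:Sigma(Q=0)}.
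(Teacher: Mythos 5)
Your proposal is correct and follows essentially the same route as the paper: invert $P_\l$ using the strict positivity and monotonicity from \autoref{prop:Pl-property}, show $\Si_\l=P_\l^{-1}$ solves \rf{Ric:Sigma(Q=0)} with terminal value $\l^{-1}I$, pass to the monotone limit in the integral form, and prove uniqueness by the resolvent identity plus Gronwall. Your key estimate, $\cR(\Si_\l)^{-1}=I-(R_{11}+P_\l)^{-1}R_{11}$ combined with $(R_{11}+P_\l)^{-1}\les(R_{11}+P_{\l_0})^{-1}$, is in fact a slightly cleaner rendering of the paper's own bound (the paper reaches the same conclusion through a lower bound on $(I+\Si_\l R_{11})(I+\Si_\l R_{11})^\top$ via the auxiliary matrices $K=R_{11}+P_{\l_0}$ and $L_\l=P_\l-P_{\l_0}$), so the difference is cosmetic rather than structural.
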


\begin{proof}
{\it Uniqueness.} Suppose that $\Si$ and $\Pi$ are two solutions of \rf{Ric:Sigma(Q=0)} satisfying the
properties stated in the theorem. Then $\D \deq \Si-\Pi$ satisfies $\D(T)=0$ and
\begin{align*}
\dot\D &= A\D + \D A^\top - \D S_2^\top R_{22}^{-1}\cB(\Si)^\top - \cB(\Pi)R_{22}^{-1}S_2\D \\
&~\hp{=} -\D S_1\cR(\Si)^{-1}\Si\cC(\Si)^\top -\cC(\Pi)\[\cR(\Si)^{-1}\Si\cC(\Si)^\top - \cR(\Pi)^{-1}\Pi\cC(\Pi)^\top\].
\end{align*}
Note that
\begin{align*}
& \cR(\Si)^{-1}\Si\cC(\Si)^\top - \cR(\Pi)^{-1}\Pi\cC(\Pi)^\top \\
&\q= -\,\cR(\Si)^{-1}\D R_{11}\cR(\Pi)^{-1}\Si\cC(\Si)^\top + \cR(\Pi)^{-1}\[\Si\cC(\Si)^\top - \Pi\cC(\Pi)^\top\]  \\
&\q= -\,\cR(\Si)^{-1}\D R_{11}\cR(\Pi)^{-1}\Si\cC(\Si)^\top + \cR(\Pi)^{-1}\[\D\cC(\Si)^\top + \Pi S_1\D\].
\end{align*}
It follows that
\begin{align*}
\dot\D(t) &= A\D + \D A^\top - \D S_2^\top R_{22}^{-1}\cB(\Si)^\top - \cB(\Pi)R_{22}^{-1}S_2\D -\D S_1\cR(\Si)^{-1}\Si\cC(\Si)^\top \\
&~\hp{=} +\cC(\Pi)\cR(\Si)^{-1}\D R_{11}\cR(\Pi)^{-1}\Si\cC(\Si)^\top -\cC(\Pi)\cR(\Pi)^{-1}\[\D\cC(\Si)^\top + \Pi S_1\D\] \\
&\equiv f(t,\D(t)).
\end{align*}
Noting that $\D(T)=0$ and $f(t,x)$ is Lipschitz-continuous in $x$,
we conclude by Gronwall's inequality that $\D(t)=0$ for all $t\in[0,T]$.

\ms

{\it Existence.} According to \autoref{prop:Pl-property}, for $\l>\l_0$, the solution $P_\l$ of \rf{Ric:P(Q=0)}
is positive definite on $[0,T]$. Thus we may define
$$ \Si_\l(t) \deq P_\l^{-1}(t), \q t\in[0,T]. $$
Again, by \autoref{prop:Pl-property}, for each fixed $t\in[0,T]$, $\Si_\l(t)$ is decreasing in $\l$ and bounded below by zero,
so the family $\{\Si_\l(t)\}_{\l>\l_0}$ is bounded uniformly in $t\in[0,T]$ and converges pointwise to some positive semidefinite
function $\Si:[0,T]\to\dbS^n$. Next we shall prove the following:
\begin{enumerate}[\indent(a)]
\item $\cR(t,\Si(t)) = I + \Si(t)R_{11}(t)$ is invertible for a.e. $t\in[0,T]$;
\item $\cR(\Si)^{-1}\in L^\i(0,T;\dbR^n)$; and
\item $\Si$ solves the equation \rf{Ric:Sigma(Q=0)}.
\end{enumerate}
For (a) and (b), we observe first that for $\l>\l_0$, $I + \Si_\l R_{11}$ is invertible
a.e. on $[0,T]$ since by \rf{Ric:R+DPD>0},
$$ P_\l(I + \Si_\l R_{11}) = P_\l +R_{11} \gg0. $$
Define $K=R_{11}+P_{\l_0}$ and $L_\l=P_\l-P_{\l_0}$. For every $\l>\l_0$,
$$ 0\les (K+L_\l)^{-1} = (R_{11}+P_\l)^{-1} \les (R_{11}+P_{\l_0})^{-1}, $$
from which we obtain
\begin{align*}
|(K+L_\l)^{-1}| \les |(R_{11}+P_{\l_0})^{-1}|, \q \forall \l>\l_0,
\end{align*}
and hence for every $x\in\dbR^n$,
\begin{align*}
\lan P_\l(R_{11}+P_\l)^{-2}P_\l x,x\ran
&= |(K+L_\l)^{-1}(L_\l+P_{\l_0})x|^2 \\
&\les 2|(K+L_\l)^{-1}L_\l x|^2 + 2|(K+L_\l)^{-1}P_{\l_0}x|^2 \\
&= 2|x-(K+L_\l)^{-1}K x|^2 + 2|(K+L_\l)^{-1}P_{\l_0}x|^2 \\
&\les 4\[1+\big|(K+L_\l)^{-1}\big|^2\(|K|^2+|P_{\l_0}|^2\)\]|x|^2  \\
&\les 4\[1+\big|(R_{11}+P_{\l_0})^{-1}\big|^2\(|K|^2+|P_{\l_0}|^2\)\]|x|^2.
\end{align*}
It follows that for every $\l>\l_0$,
\begin{align*}
(I + \Si_\l R_{11}) (I + \Si_\l R_{11})^\top
&= \[P_\l(R_{11}+P_\l)^{-2}P_\l\]^{-1} \\
&\ges {1\over4}\[1+\big|(R_{11}+P_{\l_0})^{-1}\big|^2\(|K|^2+|P_{\l_0}|^2\)\]^{-1} I.
\end{align*}
Letting $\l\to\i$ yields
\begin{align*}
(I + \Si R_{11}) (I + \Si R_{11})^\top \ges {1\over4}\[1+\big|(R_{11}+P_{\l_0})^{-1}\big|^2\(|K|^2+|P_{\l_0}|^2\)\]^{-1} I.
\end{align*}
This implies (a) and (b). For (c), we have from the identity
$$ \dot\Si_\l(t)P_\l(t) + \Si_\l(t)\dot P_\l(t) = {d\over dt}[\Si_\l(t)P_\l(t)] =0 $$
that
\begin{align*}
\dot\Si_\l(t)
&= -\Si_\l(t)\dot P_\l(t)\Si_\l(t) \\
&= A\Si_\l + \Si_\l A^\top - \begin{pmatrix}C^\top + S_1\Si_\l \\ B^\top + S_2\Si_\l \end{pmatrix}^\top
                             \begin{pmatrix}R_{11}+P_\l & 0 \\ 0 & R_{22} \end{pmatrix}^{-1}
                             \begin{pmatrix}C^\top + S_1\Si_\l \\ B^\top + S_2\Si_\l \end{pmatrix}  \\
&= A\Si_\l + \Si_\l A^\top - \cB(\Si_\l)R_{22}^{-1}\cB(\Si_\l)^\top - \cC(\Si_\l)(R_{11}+P_\l)^{-1}\cC(\Si_\l)^\top \\
&= A\Si_\l + \Si_\l A^\top - \cB(\Si_\l)R_{22}^{-1}\cB(\Si_\l)^\top - \cC(\Si_\l)\cR(\Si_\l)^{-1}\Si_\l\cC(\Si_\l)^\top.
\end{align*}
Consequently,
\begin{align}\label{Sil-integral}
\Si_\l(t) &= \l^{-1} I - \!\int_t^T \[A\Si_\l + \Si_\l A^\top\! -\cB(\Si_\l)R_{22}^{-1}\cB(\Si_\l)^\top\! \nn\\
&~\hp{=} -\cC(\Si_\l)\cR(\Si_\l)^{-1}\Si_\l\cC(\Si_\l)^\top\]ds.
\end{align}
Letting $\l\to\i$ in \rf{Sil-integral}, we obtain by the bounded convergence theorem that
\begin{align*}
\Si(t) &= - \int_t^T \[A\Si + \Si A^\top -\cB(\Si)R_{22}^{-1}\cB(\Si)^\top\! -\cC(\Si)^\top\cR(\Si)^{-1}\Si\cC(\Si)\]ds,
\end{align*}
which is the integral version of \rf{Ric:Sigma(Q=0)}.
\end{proof}

With the solution $\Si$ to the Riccati equation \rf{Ric:Sigma(Q=0)}, we further introduce the following linear BSDE:
\begin{equation}\label{BSDE:phi}
\left\{\begin{aligned}
  d\f(t) &= \Big\{[A -\cB(\Si)R_{22}^{-1}S_2 -\cC(\Si)\cR(\Si)^{-1}\Si S_1]\f  \\
         &~\hp{=} + \cC(\Si)\cR(\Si)^{-1}\b \Big\}dt + \b dW(t), \q t\in[0,T], \\
   \f(T) &= \xi.
\end{aligned}\right.
\end{equation}
Since $R_{22}\gg0$ and $\Si$ is such that $\cR(\Si)^{-1}\in L^\i(0,T;\dbR^n)$, the BSDE \rf{BSDE:phi} is clearly uniquely solvable.

\ms

In terms of the solution $\Si$ to the Riccati equation \rf{Ric:Sigma(Q=0)} and the adapted solution $(\f,\b)$
to the BSDE \rf{BSDE:phi}, we can construct the optimal control of Problem (BSLQ) as follows.

\begin{theorem}\label{thm:main-u}
Let \ref{ass:A1}--\ref{ass:A3} and \rf{G=Q=R12=0} hold.
Let $(\f,\b)$ be the adapted solution to the BSDE \rf{BSDE:phi} and $X$ the solution to the following SDE:
\begin{equation}\label{SDE:X-phi}
\left\{\begin{aligned}
dX(t) &= \Big\{\big[S_1^\top\cR(\Si)^{-1}\Si\cC(\Si)^\top + S_2^\top R_{22}^{-1}\cB(\Si)^\top -A^\top\big] X \\
&~\hp{=} -\big[S_1^\top\cR(\Si)^{-1}\Si S_1 + S_2^\top R_{22}^{-1}S_2\big]\f + S_1^\top\cR(\Si)^{-1}\b\Big\}dt \\
&~\hp{=} -\big[\cR(\Si)^{-1}\big]^\top\big[\cC(\Si)^\top X -S_1\f -R_{11}\b\big] dW(t), \\
X(0) &= 0.
\end{aligned}\right.
\end{equation}
Then the optimal control of Problem (BSLQ) for the terminal state $\xi$ is given by
\begin{equation}\label{opt:u-rep}\begin{aligned}
u(t) &= [R_{22}(t)]^{-1} [\cB(t,\Si(t))^\top X(t) -S_2(t)\f(t)], \q t\in[0,T].
\end{aligned}\end{equation}
\end{theorem}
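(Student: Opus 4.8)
The plan is to establish optimality through \autoref{thm:optimality}, so that the completion-of-squares argument can be bypassed entirely. Under \ref{ass:A3} we have $J(0;u)\ges\d\,\dbE\int_0^T|u(t)|^2dt\ges0$ for every $u\in\sU$, so condition (i) of \autoref{thm:optimality} holds automatically; moreover, since $R_{22}^{-1}$ and $\cR(\Si)^{-1}$ are bounded and \rf{BSDE:phi}--\rf{SDE:X-phi} are uniquely solvable, the process $u$ in \rf{opt:u-rep} lies in $\sU$. It therefore remains to exhibit the adapted solution $(X^*,Y^*,Z^*)$ of the (decoupled) FBSDE \rf{FBSDE} corresponding to this $u$ and to verify the stationarity relation \rf{DJ=0}, which under \rf{G=Q=R12=0} reads $S_2Y^*-B^\top X^*+R_{22}u=0$.

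The central device is the ansatz
$$ X^*\deq X, \qq Y^*\deq \f-\Si X, \qq Z^*\deq \hb{diffusion coefficient of }Y^*. $$
Applying It\^o's formula to $\f-\Si X$ and inserting the $dW$-term of \rf{SDE:X-phi} gives $Z^*=\b+\Si[\cR(\Si)^\top]^{-1}[\cC(\Si)^\top X-S_1\f-R_{11}\b]$. Since $\Si,R_{11}\in\dbS^n$, the elementary identity $\Si(I+R_{11}\Si)=(I+\Si R_{11})\Si$ furnishes the two commutation rules $\Si[\cR(\Si)^\top]^{-1}=\cR(\Si)^{-1}\Si$ and $R_{11}\cR(\Si)^{-1}=[\cR(\Si)^\top]^{-1}R_{11}$, after which $Z^*$ simplifies to
$$ Z^* = \cR(\Si)^{-1}\big[\b+\Si\cC(\Si)^\top X-\Si S_1\f\big]. $$
With this choice the martingale part of $Y^*$ equals $Z^*$ by construction, and the boundary data are immediate: $Y^*(T)=\f(T)-\Si(T)X(T)=\xi$ because $\Si(T)=0$, while $X^*(0)=0=GY^*(0)$ because $G=0$.

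Next I would verify the adjoint (forward) equation in \rf{FBSDE} by collecting the coefficients of $X$, $\f$ and $\b$ in $-A^\top X^*+S_1^\top Z^*+S_2^\top u$ and in $-C^\top X^*+S_1Y^*+R_{11}Z^*$; using the same two commutation rules together with $\cB(\Si)=B+\Si S_2^\top$ and $\cC(\Si)=C+\Si S_1^\top$, these reproduce exactly the drift and diffusion of \rf{SDE:X-phi}. The stationarity relation \rf{DJ=0} then follows at once, since $\cB(\Si)^\top=B^\top+S_2\Si$ gives
$$ S_2Y^*-B^\top X^*+R_{22}u = S_2(\f-\Si X)-B^\top X+\big[\cB(\Si)^\top X-S_2\f\big]=0. $$

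The step I expect to be the main obstacle is checking that the drift of $Y^*=\f-\Si X$ coincides with $AY^*+Bu+CZ^*$. Writing $dY^*=d\f-\dot\Si X\,dt-\Si\,dX$ and grouping terms, the coefficients of $\f$ and of $\b$ collapse (again via $\cB(\Si)=B+\Si S_2^\top$ and $\cC(\Si)=C+\Si S_1^\top$) to $A\f-BR_{22}^{-1}S_2\f-C\cR(\Si)^{-1}\Si S_1\f$ and $C\cR(\Si)^{-1}\b$, matching the corresponding parts of $AY^*+Bu+CZ^*$. The coefficients of $X$ require
$$ \dot\Si-A\Si-\Si A^\top+\cB(\Si)R_{22}^{-1}\cB(\Si)^\top+\cC(\Si)\cR(\Si)^{-1}\Si\cC(\Si)^\top=0, $$
which is precisely the Riccati equation \rf{Ric:Sigma(Q=0)}; this is the single place where the equation for $\Si$ is consumed. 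Once these identities are in place, $(X^*,Y^*,Z^*)$ is the unique adapted solution of \rf{FBSDE} satisfying \rf{DJ=0}, and \autoref{thm:optimality} delivers the optimality of $u$. The only delicate bookkeeping is tracking the non-commuting factors $\Si$ and $\cR(\Si)$ through the transpose identities and recognizing the Riccati structure hidden in the $X$-terms.
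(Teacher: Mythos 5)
Your proposal is correct and follows essentially the same route as the paper's proof: the same ansatz $Y^*=\f-\Si X$ with $Z^*=\cR(\Si)^{-1}[\Si\cC(\Si)^\top X-\Si S_1\f+\b]$, the same commutation identities for $\Si$ and $\cR(\Si)$, the Riccati equation \rf{Ric:Sigma(Q=0)} consumed at exactly the same point (matching the $X$-coefficients in the drift of $Y^*$), and the conclusion via \autoref{thm:optimality}. The only cosmetic differences are that you derive $Z^*$ as the diffusion coefficient of $Y^*$ rather than positing it, and you make explicit the (trivial) verifications that \ref{ass:A3} gives condition (i) of \autoref{thm:optimality} and that $u\in\sU$, which the paper leaves implicit.
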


\begin{proof}
Let us define for $t\in[0,T]$,
\begin{align}
\label{dcpl:Y=}
Y(t) &= -\Si(t)X(t) + \f(t), \\
\label{dcpl:Z=}
Z(t) &= \cR(t,\Si(t))^{-1}[\Si(t)\cC(t,\Si(t))^\top X(t) - \Si(t)S_1(t)\f(t) + \b(t)].
\end{align}
We observe that
\begin{align}\label{DJ=0:G=Q=0}
R_{22}u &= \cB(\Si)^\top X -S_2\f = B^\top X + S_2(\Si X-\f) = B^\top X - S_2Y.
\end{align}
Furthermore, using \rf{opt:u-rep} and \rf{dcpl:Y=}  we obtain
\begin{align*}
S_1^\top Z + S_2^\top u
&= S_1^\top\cR(\Si)^{-1}[\Si\cC(\Si)^\top X-\Si S_1\f+\b]+S_2^\top R_{22}^{-1}\cB(\Si)^\top X
  -S_2^\top R_{22}^{-1}S_2\f  \\
&= \big[S_1^\top\cR(\Si)^{-1}\Si\cC(\Si)^\top + S_2^\top R_{22}^{-1}\cB(\Si)^\top\big]X \\
&~\hp{=} -\big[S_1^\top\cR(\Si)^{-1}\Si S_1 + S_2^\top R_{22}^{-1}S_2\big]\f +S_1^\top\cR(\Si)^{-1}\b,
\end{align*}
from which it follows that
\begin{align}\label{S1Z+S2u}
-A^\top X +S_1^\top Z +S_2^\top u
&= \big[S_1^\top\cR(\Si)^{-1}\Si\cC(\Si)^\top + S_2^\top R_{22}^{-1}\cB(\Si)^\top-A^\top\big] X \nn\\
&~\hp{=} -\big[S_1^\top\cR(\Si)^{-1}\Si S_1 + S_2^\top R_{22}^{-1}S_2\big]\f + S_1^\top\cR(\Si)^{-1}\b.
\end{align}
Similarly, we can get
\begin{align*}
-C^\top X +S_1Y +R_{11}Z
&= -(C^\top+S_1\Si)X + S_1\f + R_{11}Z  \\
&= [R_{11}\cR(\Si)^{-1}\Si-I]\cC(\Si)^\top X + [I-R_{11}\cR(\Si)^{-1}\Si]S_1\f  \\
&~\hp{=} + R_{11}\cR(\Si)^{-1}\b.
\end{align*}
Noting that
\begin{align*}
& R_{11}\cR(\Si)^{-1} = R_{11}(I+\Si R_{11})^{-1} = (I+R_{11}\Si)^{-1}R_{11}, \\
& I-R_{11}\cR(\Si)^{-1}\Si = (I+R_{11}\Si)^{-1} = \big[\cR(\Si)^{-1}\big]^\top,
\end{align*}
we further obtain
\begin{align}\label{R11Z+R12u}
-C^\top X +S_1Y +R_{11}Z
&= -\big[\cR(\Si)^{-1}\big]^\top[\cC(\Si)^\top X -S_1\f -R_{11}\b].
\end{align}
This implies that the solution of \rf{SDE:X-phi} satisfies the equation
\begin{equation}\label{FBSDE:X=}
\left\{\begin{aligned}
dX(t) &= (-A^\top X +S_1^\top Z +S_2^\top u)dt +(-C^\top X +S_1Y +R_{11}Z)dW, \\
 X(0) &= 0.
\end{aligned}\right.
\end{equation}
Next, for simplicity let us set
\begin{align}\label{alpha=}
 \a=[A -\cB(\Si)R_{22}^{-1}S_2 -\cC(\Si)\cR(\Si)^{-1}\Si S_1]\f + \cC(\Si)\cR(\Si)^{-1}\b.
\end{align}
By It\^{o}'s rule, we have
\begin{align*}
dY &= -\dot\Si Xdt - \Si dX + d\f \\
&= [\a-\dot\Si X-\Si(-A^\top X +S_1^\top Z +S_2^\top u)]dt + [\b - \Si(-C^\top X +S_1Y +R_{11}Z)]dW.
\end{align*}
Using \rf{S1Z+S2u} and \rf{alpha=} we get
\begin{align*}
&\a-\dot\Si X-\Si(-A^\top X +S_1^\top Z +S_2^\top u) \\
&\q= \a-\big[\dot\Si-\Si A^\top+\Si S_1^\top\cR(\Si)^{-1}\Si\cC(\Si)^\top +\Si S_2^\top R_{22}^{-1}\cB(\Si)^\top\big]X \\
&~\hp{\q=} +\Si\big[S_1^\top\cR(\Si)^{-1}\Si S_1 + S_2^\top R_{22}^{-1}S_2\big]\f - \Si S_1^\top\cR(\Si)^{-1}\b  \\
&\q= \a-\big[A\Si-C\cR(\Si)^{-1}\Si\cC(\Si)^\top-BR_{22}^{-1}\cB(\Si)^\top\big]X \\
&~\hp{\q=} +\Si\big[S_1^\top\cR(\Si)^{-1}\Si S_1 + S_2^\top R_{22}^{-1}S_2\big]\f - \Si S_1^\top\cR(\Si)^{-1}\b  \\
&\q= AY + [C\cR(\Si)^{-1}\Si\cC(\Si)^\top + BR_{22}^{-1}\cB(\Si)^\top]X - BR_{22}^{-1}S_2\f \\
&~\hp{\q=} -C\cR(\Si)^{-1}(\Si S_1\f-\b) \\
&\q= AY + BR_{22}^{-1}[\cB(\Si)^\top X -S_2\f] + C\cR(\Si)^{-1}[\Si\cC(\Si)^\top X-\Si S_1\f+\b]  \\
&\q= AY + Bu +CZ.
\end{align*}
Using \rf{R11Z+R12u} and the relations
\begin{align*}
\Si\big[\cR(\Si)^{-1}\big]^\top &= \Si(I+R_{11}\Si)^{-1} = (I+\Si R_{11})^{-1}\Si = \cR(\Si)^{-1}\Si, \\
I-\cR(\Si)^{-1}\Si R_{11} &= I-(I+\Si R_{11})^{-1}\Si R_{11} = (I+\Si R_{11})^{-1} = \cR(\Si)^{-1},
\end{align*}
we get
\begin{align*}
&\b-\Si(-C^\top X +S_1Y +R_{11}Z) \\
&\q= \b +\Si\big[\cR(\Si)^{-1}\big]^\top[\cC(\Si)^\top X -S_1\f -R_{11}\b] \\
&\q= \cR(\Si)^{-1}[\Si\cC(\Si)^\top X -\Si S_1\f] + [I-\cR(\Si)^{-1}\Si R_{11}]\b \\
&\q= Z.
\end{align*}
Therefore, the pair $(Y,Z)$ defined by \rf{dcpl:Y=}--\rf{dcpl:Z=} satisfies the backward equation
\begin{equation}\label{FBSDE:Y=}
\left\{\begin{aligned}
dY(t) &= (AY +Bu +CZ)dt + ZdW, \\
 Y(T) &= \xi.
\end{aligned}\right.
\end{equation}
Combining \rf{DJ=0:G=Q=0}, \rf{FBSDE:X=} and \rf{FBSDE:Y=}, we see that the solution $X$ of \rf{SDE:X-phi},
the pair $(Y,Z)$ defined by \rf{dcpl:Y=}--\rf{dcpl:Z=}, and the control $u$ defined by \rf{opt:u-rep}
satisfy the FBSDE
\begin{equation}\label{FBSDE:G=Q=0}
\left\{\begin{aligned}
dX(t) &= (-A^\top X +S_1^\top Z +S_2^\top u)dt +(-C^\top X +S_1Y +R_{11}Z)dW, \\
dY(t) &= (AY +Bu +CZ)dt + ZdW, \\
 X(0) &= 0, \q Y(T) = \xi,
\end{aligned}\right.
\end{equation}
and the condition
\begin{align}\label{DJ=0:G=Q=0*}
  S_2Y - B^\top X  + R_{22}u  =0.
\end{align}
Therefore, by \autoref{thm:optimality}, $u$ is the (unique) optimal control for the terminal state $\xi$.
\end{proof}

We conclude this section with a representation of the value function $V(\xi)$.

\begin{theorem}\label{thm:main-V}
Let \ref{ass:A1}--\ref{ass:A3} and \rf{G=Q=R12=0} hold. Then the value function of Problem (BSLQ) is given by
\begin{align}\label{V(xi)=}
V(\xi) &= \dbE\int_0^T \Big\{\lan R_{11}\cR(\Si)^{-1}\b,\b\ran + 2\lan S_1^\top\cR(\Si)^{-1}\b,\f\ran \nn\\
&~\hp{=} -\lan[S_1^\top\cR(\Si)^{-1}\Si S_1+S_2^\top R_{22}^{-1}S_2]\f,\f\ran \Big\}dt,
\end{align}
where $(\f,\b)$ is the adapted solution to the BSDE \rf{BSDE:phi}.
\end{theorem}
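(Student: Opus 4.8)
The plan is to exploit the optimal system produced in \autoref{thm:main-u} and read off the value by two applications of It\^o's formula to inner products involving the forward adjoint process $X$ of \rf{SDE:X-phi}. Since $u$ given by \rf{opt:u-rep} is optimal, $V(\xi)=J(\xi;u)$, and by the proof of \autoref{thm:main-u} the quadruple $(X,Y,Z,u)$ solves the FBSDE \rf{FBSDE:G=Q=0} together with the stationarity relation \rf{DJ=0:G=Q=0*}. The key observation is that both $Y$ and $\f$ terminate at $\xi$ while $X$ starts at $0$, so the single quantity $\dbE\lan X(T),\xi\ran$ can be computed in two different ways: pairing $X$ with $Y$ will reproduce $J(\xi;u)$, while pairing $X$ with $\f$ will reproduce the right-hand side of \rf{V(xi)=}.

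First I would apply It\^o's formula to $t\mapsto\lan X(t),Y(t)\ran$ using the two equations in \rf{FBSDE:G=Q=0}. The terms containing $A$ cancel by antisymmetry, and the remaining drift density is $2\lan S_1Y,Z\ran+\lan S_2Y,u\ran+\lan B^\top X,u\ran+\lan R_{11}Z,Z\ran$. Substituting $B^\top X=S_2Y+R_{22}u$ from \rf{DJ=0:G=Q=0*} turns this into exactly the integrand $2\lan S_1Y,Z\ran+2\lan S_2Y,u\ran+\lan R_{11}Z,Z\ran+\lan R_{22}u,u\ran$ of the reduced cost functional. Taking expectations (the stochastic integrals are true martingales thanks to the $L^2$-estimates of \autoref{lmm:Sun2019} and the well-posedness of \rf{SDE:X-phi}) and using $X(0)=0$, $Y(T)=\xi$ yields $J(\xi;u)=\dbE\lan X(T),\xi\ran$.

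Next I would apply It\^o's formula to $t\mapsto\lan X(t),\f(t)\ran$, where I write the drift and diffusion of $X$ in the form \rf{S1Z+S2u} and \rf{R11Z+R12u} and use the dynamics \rf{BSDE:phi} of $\f$. Since $\f(T)=\xi$ and $X(0)=0$, the left side again equals $\dbE\lan X(T),\xi\ran$. The crux is to verify that every term bilinear in $X$ drops out. The $X$--$\f$ couplings cancel because the operator multiplying $\f$ in the drift of $\f$ is precisely the negative transpose of the operator multiplying $X$ in the drift of $X$; checking this uses the identity $\Si[\cR(\Si)^{-1}]^\top=\cR(\Si)^{-1}\Si$ from \autoref{thm:main-u}. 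The $X$--$\b$ couplings cancel because the drift contribution $\lan X,\cC(\Si)\cR(\Si)^{-1}\b\ran$ of $\f$ is exactly matched by the $\cC(\Si)^\top X$ part of the quadratic-covariation density $-\lan[\cR(\Si)^{-1}]^\top(\cC(\Si)^\top X-S_1\f-R_{11}\b),\b\ran$. The terms quadratic in $(\f,\b)$ that survive are $-\lan[S_1^\top\cR(\Si)^{-1}\Si S_1+S_2^\top R_{22}^{-1}S_2]\f,\f\ran+2\lan S_1^\top\cR(\Si)^{-1}\b,\f\ran+\lan R_{11}\b,\cR(\Si)^{-1}\b\ran$, where I combine the two identical cross terms and, to put the last term in the stated shape, use the symmetry of $R_{11}\cR(\Si)^{-1}=(I+R_{11}\Si)^{-1}R_{11}$ noted in \autoref{thm:main-u}. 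This gives $\dbE\lan X(T),\xi\ran=\dbE\int_0^T\{\cdots\}\,dt$ with the integrand of \rf{V(xi)=}.

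Combining the two identities yields $V(\xi)=J(\xi;u)=\dbE\lan X(T),\xi\ran=\dbE\int_0^T\{\cdots\}\,dt$, which is \rf{V(xi)=}. I expect the main obstacle to be the bookkeeping in the second It\^o computation: one must transpose the coefficients of $X$ correctly and invoke the $\cR(\Si)$-identities to see that the bilinear-in-$X$ contributions annihilate. Once that cancellation is confirmed, the pure $(\f,\b)$ remainder is immediately the claimed expression, and no completion-of-squares argument is needed.
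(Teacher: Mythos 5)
Your proposal is correct and follows essentially the same route as the paper: both compute $\dbE\lan X(T),\xi\ran$ twice by integration by parts---once pairing $X$ with $Y$ and invoking the stationarity relation \rf{DJ=0:G=Q=0*} to recover $J(\xi;u)$, and once pairing $X$ with $\f$ in the form \rf{SDE:X-phi} to obtain the integrand of \rf{V(xi)=}, using the same $\cR(\Si)$-identities to cancel the bilinear-in-$X$ terms. The only difference is cosmetic ordering in the first step (you expand the It\^o drift directly and then substitute, while the paper first rewrites $J(\xi;u)$ and then matches it to the integration-by-parts identity).
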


\begin{proof}
Let $u$ be the optimal control for the terminal state $\xi$. Then, by \autoref{thm:optimality},
the adapted solution $(X,Y,Z)$ of \rf{FBSDE:G=Q=0} satisfies \rf{DJ=0:G=Q=0*}. Observe that
\begin{align*}
V(\xi) &= J(\xi;u) = \dbE\int_0^T \[2\lan S_1Y,Z\ran + 2\lan S_2Y,u\ran + \lan R_{11}Z,Z\ran + \lan R_{22}u,u\ran\]dt \\
&= \dbE\int_0^T \[\lan S_1^\top Z+S_2^\top u,Y\ran + \lan S_1Y+R_{11}Z,Z\ran + \lan S_2Y+R_{22}u,u\ran\]dt \\
&= \dbE\int_0^T \[\lan S_1^\top Z+S_2^\top u,Y\ran + \lan S_1Y+R_{11}Z,Z\ran + \lan B^\top X,u\ran\]dt.
\end{align*}
Integration by parts yields
\begin{align*}
\dbE\lan X(T),Y(T)\ran
&= \dbE\int_0^T\[\lan X,AY+Bu+CZ\ran + \lan-A^\top X+S_1^\top Z+S_2^\top u,Y\ran \\
&~\hp{=} +\lan-C^\top X+S_1Y+R_{11}Z,Z\ran\]dt \\
&= \dbE\int_0^T \[\lan X,Bu\ran + \lan S_1^\top Z+S_2^\top u,Y\ran + \lan S_1Y+R_{11}Z,Z\ran \]dt \\
&= V(\xi).
\end{align*}
From the proof of \autoref{thm:main-u}, we see that $X$ also satisfies the equation \rf{SDE:X-phi}.
Using \rf{SDE:X-phi} and integration by parts again, we obtain
\begin{align*}
\dbE\lan X(T),\f(T)\ran
&= \dbE\int_0^T\Big\{\blan \big[S_1^\top\cR(\Si)^{-1}\Si\cC(\Si)^\top + S_2^\top R_{22}^{-1}\cB(\Si)^\top -A^\top\big]X,\f\bran \\
&~\hp{=} -\blan\big[S_1^\top\cR(\Si)^{-1}\Si S_1 + S_2^\top R_{22}^{-1}S_2\big]\f,\f\bran + \blan S_1^\top\cR(\Si)^{-1}\b,\f\bran  \\
&~\hp{=} +\blan X,[A -\cB(\Si)R_{22}^{-1}S_2 -\cC(\Si)\cR(\Si)^{-1}\Si S_1]\f\bran + \blan X,\cC(\Si)\cR(\Si)^{-1}\b\bran \\
&~\hp{=} -\blan\big[\cR(\Si)^{-1}\big]^\top[\cC(\Si)^\top X -S_1\f -R_{11}\b],\b\bran \Big\}dt \\
&= \dbE\int_0^T\Big\{\lan R_{11}\cR(\Si)^{-1}\b,\b\ran + 2\lan S_1^\top\cR(\Si)^{-1}\b,\f\ran  \\
&~\hp{=} -\lan[S_1^\top\cR(\Si)^{-1}\Si S_1+S_2^\top R_{22}^{-1}S_2]\f,\f\ran \Big\}dt.
\end{align*}
The representation \rf{V(xi)=} then follows from the fact
$$ V(\xi) = \dbE\lan X(T),Y(T)\ran = \dbE\lan X(T),\xi\ran = \dbE\lan X(T),\f(T)\ran. $$
The proof is complete.
\end{proof}

\section{Conclusion}\label{sec:conclusion}

For the reader's convenience, we conclude the paper by generalizing the results obtained in \autoref{sec:construction}
to the case without the assumption \rf{G=Q=R12=0}.
We shall only present the result, as the proof can be easily given using the argument at the beginning
of \autoref{sec:construction} and the results established there for the case \rf{G=Q=R12=0}.

\ms

Recall the notation
\begin{align*}
\sC(t)      &= C(t)-B(t)[R_{22}(t)]^{-1}R_{21}(t), \\
\sS_1(t)    &= S_1(t)-R_{12}(t)[R_{22}(t)]^{-1}S_2(t), \\
\sR_{11}(t) &= R_{11}(t)-R_{12}(t)[R_{22}(t)]^{-1}R_{21}(t).
\end{align*}
Let $H\in C([0,T];\dbS^n)$ be the unique solution to the linear ODE
\begin{equation*}
\left\{\begin{aligned}
  &\dot H(t) + H(t)A(t) + A(t)^\top H(t) + Q(t) =0, \q t\in[0,T],\\
  &H(0)= G,
\end{aligned}\right.
\end{equation*}
and let
$$\begin{aligned}
S_{^1}^{_H}(t)    &= \sS_1(t) + \sC(t)^\top H(t),  \q& \cB^{_H}(t,\Si(t)) &= B(t)+\Si(t)[S_{^2}^{_H}(t)]^\top,\\
S_{^2}^{_H}(t)    &= S_2(t)+B(t)^\top H(t),        \q& \cC^{_H}(t,\Si(t)) &= \sC(t)+\Si(t)[S_{^1}^{_H}(t)]^\top,\\
R_{^{11}}^{_H}(t) &= \sR_{11}(t)+H(t),             \q& \cR^{_H}(t,\Si(t)) &= I + \Si(t)R_{^{11}}^{_H}(t).
\end{aligned}$$

\begin{theorem}
Let \ref{ass:A1}--\ref{ass:A3} hold. We have the following results.
\begin{enumerate}[(i)]
\item The Riccati equation
\begin{equation}\label{Ric:general}
\left\{\begin{aligned}
&\dot\Si(t) - A(t)\Si(t) - \Si(t)A(t)^\top + \cB^{_H}(t,\Si(t))[R_{22}(t)]^{-1}[\cB^{_H}(t,\Si(t))]^\top \\
&\hp{\dot\Si(t)} +\cC^{_H}(t,\Si(t))[\cR^{_H}(t,\Si(t))]^{-1}\Si(t)[\cC^{_H}(t,\Si(t))]^\top=0, \\
&\Si(T)= 0
\end{aligned}\right.
\end{equation}
admits a unique positive semidefinite solution $\Si\in C([0,T];\dbS^n)$ such that $\cR^{_H}(\Si)$ is invertible
a.e. on $[0,T]$ and $[\cR^{_H}(\Si)]^{-1}\in L^\i(0,T;\dbR^n)$.
\item Let $(\f,\b)$ be the adapted solution to the BSDE
\begin{equation}\label{BSDE:general}
\left\{\begin{aligned}
  d\f(t) &= \Big\{[A -\cB^{_H}(\Si)R_{22}^{-1}S_2^{_H} -\cC^{_H}(\Si)[\cR^{_H}(\Si)]^{-1}\Si S_1^{_H}]\f  \\
         &~\hp{=} + \cC^{_H}(\Si)[\cR^{_H}(\Si)]^{-1}\b \Big\}dt + \b dW(t), \q t\in[0,T], \\
   \f(T) &= \xi,
\end{aligned}\right.
\end{equation}
and let $X$ be the solution to the following SDE:
\begin{equation*}
\left\{\begin{aligned}
dX(t) &= \Big\{\[(S_{^1}^{_H})^\top[\cR^{_H}(\Si)]^{-1}\Si[\cC^{_H}(\Si)]^\top + (S_{^2}^{_H})^\top R_{22}^{-1}[\cB^{_H}(\Si)]^\top -A^\top\] X \\
&~\hp{=} -\[(S_{^1}^{_H})^\top[\cR^{_H}(\Si)]^{-1}\Si S_{^1}^{_H} + (S_{^2}^{_H})^\top R_{22}^{-1}S_{^2}^{_H}\]\f + (S_{^1}^{_H})^\top[\cR^{_H}(\Si)]^{-1}\b\Big\}dt \\
&~\hp{=} -\big[\cR^{_H}(\Si)^{-1}\big]^\top[\cC^{_H}(\Si)^\top X -S_{^1}^{_H}\f -R_{^{11}}^{_H}\b]dW, \\
X(0) &= 0.
\end{aligned}\right.
\end{equation*}
Then the optimal control of Problem (BSLQ) for the terminal state $\xi$ is given by
\begin{align}\label{opt:general}
u &= R_{22}^{-1}\Big\{ [\cB^{_H}(\Si)^\top-R_{21}\cR^{_H}(\Si)^{-1}\Si\cC^{_H}(\Si)^\top]X  \nn\\
&~\hp{=} +[R_{21}\cR^{_H}(\Si)^{-1}\Si S_{^1}^{_H}-S_{^2}^{_H}]\f - R_{21}\cR^{_H}(\Si)^{-1}\b\Big\}.
\end{align}
\item The value function of Problem (BSLQ) is given by
\begin{align}\label{V(xi):general}
V(\xi) &= -\,\dbE\lan H(T)\xi,\xi\ran + \dbE\int_0^T \Big\{\blan R_{^{11}}^{_H}\cR^{_H}(\Si)^{-1}\b,\b\bran
          +2\blan (S_{^1}^{_H})^\top\cR^{_H}(\Si)^{-1}\b,\f\bran \nn\\
&~\hp{=}  -\blan[(S_{^1}^{_H})^\top\cR^{_H}(\Si)^{-1}\Si S_{^1}^{_H}+ (S_{^2}^{_H})^\top R_{22}^{-1}S_{^2}^{_H}]\f,\f\bran \Big\}dt ,
\end{align}
where $(\f,\b)$ is the adapted solution to the BSDE \rf{BSDE:general}.
\end{enumerate}
\end{theorem}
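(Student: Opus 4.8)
The plan is to reduce the general case to the already-settled case \eqref{G=Q=R12=0} by the two transformations introduced at the start of \autoref{sec:construction}, and then to transcribe parts (i)--(iii) from the results proved there---the solvability theorem for the Riccati equation \eqref{Ric:Sigma(Q=0)}, \autoref{thm:main-u}, and \autoref{thm:main-V}---under the substitution dictionary $C\mapsto\sC$, $S_1\mapsto S_{^1}^{_H}$, $S_2\mapsto S_{^2}^{_H}$, $R_{11}\mapsto R_{^{11}}^{_H}$.

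First I would recall that the feedback-type change of control $v=u+R_{22}^{-1}R_{21}Z$, together with $\sC=C-BR_{22}^{-1}R_{21}$, leaves the adapted solution $(Y,Z)$ of \eqref{state} unchanged while turning the state equation into \eqref{state:sY} and the cost into $\sJ(\xi;v)$ in \eqref{cost:sJ}; since $R_{22}\gg0$ (\autoref{remark:R22}) makes $R_{22}^{-1}$ bounded, this is a bounded bijection of $\sU$ with bounded inverse, so it preserves optimizers and values. Introducing $H$ from \eqref{Ric:H(0)=G} and invoking the integration-by-parts identity already derived in \autoref{sec:construction} then gives $\sJ(\xi;v)=J^{_H}(\xi;v)-\dbE\lan H(T)\xi,\xi\ran$, with $J^{_H}$ the cost \eqref{cost-H} whose weighting matrices $S_{^1}^{_H},S_{^2}^{_H},R_{^{11}}^{_H}$ are those in \eqref{SH+RH}. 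The reduced data satisfy \ref{ass:A1}--\ref{ass:A2} (all new coefficients are bounded) and \eqref{G=Q=R12=0}, and \ref{ass:A3} is inherited because $J^{_H}(0;v)=J(0;u)$ and the control transformation is a topological isomorphism. Thus the reduced problem is exactly a Problem (BSLQ) of the form treated in \autoref{sec:construction}.

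Part (i) is then immediate: applying the solvability theorem for \eqref{Ric:Sigma(Q=0)} to the reduced data yields a unique positive semidefinite $\Si$ with the required invertibility of $\cR^{_H}(\Si)$, since under the substitution above $\cB(\Si),\cC(\Si),\cR(\Si)$ become $\cB^{_H}(\Si),\cC^{_H}(\Si),\cR^{_H}(\Si)$ and \eqref{Ric:Sigma(Q=0)} becomes \eqref{Ric:general}. For part (ii) I would apply \autoref{thm:main-u} to the reduced problem: its optimal control is $v=R_{22}^{-1}[\cB^{_H}(\Si)^\top X-S_{^2}^{_H}\f]$, where $(\f,\b)$ solves \eqref{BSDE:general} (the image of \eqref{BSDE:phi}) and $X$ solves the SDE stated in the theorem (the image of \eqref{SDE:X-phi}). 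It remains to undo the first transformation through $u=v-R_{22}^{-1}R_{21}Z$, substituting $Z=\cR^{_H}(\Si)^{-1}[\Si\cC^{_H}(\Si)^\top X-\Si S_{^1}^{_H}\f+\b]$ from \eqref{dcpl:Z=}; collecting the coefficients of $X$, $\f$, and $\b$ reproduces \eqref{opt:general} verbatim.

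Finally, for part (iii) I would take infima in $J(\xi;u)=J^{_H}(\xi;v)-\dbE\lan H(T)\xi,\xi\ran$ to obtain $V(\xi)=V^{_H}(\xi)-\dbE\lan H(T)\xi,\xi\ran$, where $V^{_H}$ is the value of the reduced problem, and then apply \autoref{thm:main-V}, which expresses $V^{_H}(\xi)$ as \eqref{V(xi)=} with the $H$-superscripted coefficients; adding $-\dbE\lan H(T)\xi,\xi\ran$ gives \eqref{V(xi):general}. The only place demanding genuine care---and the sole potential pitfall---is bookkeeping: confirming that the first transformation is an honest bijection of $\sU$ leaving $(Y,Z)$ invariant, so that the process $Z$ appearing in $u=v-R_{22}^{-1}R_{21}Z$ is identical to the one produced by the construction of \autoref{thm:main-u}, and then propagating the substitution dictionary consistently through every formula. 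No new analytic difficulty beyond \autoref{sec:construction} is encountered.
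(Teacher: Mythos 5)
Your proposal is correct and follows exactly the route the paper intends: the paper itself omits the proof, stating that it ``can be easily given using the argument at the beginning of \autoref{sec:construction}'' (the change of control $v=u+R_{22}^{-1}R_{21}Z$ plus the $H$-transformation reducing to \rf{G=Q=R12=0}) together with the results proved there, which is precisely your reduction-and-substitution argument. Your back-substitution of $Z$ from \rf{dcpl:Z=} to recover \rf{opt:general}, and the shift $V(\xi)=V^{_H}(\xi)-\dbE\lan H(T)\xi,\xi\ran$ for part (iii), check out against the stated formulas.
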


To summarize, we have investigated an indefinite backward stochastic LQ optimal control problem with deterministic
coefficients and have developed a general procedure for constructing optimal controls.
The crucial idea is to establish the connection between backward stochastic LQ optimal control problems and forward
stochastic LQ optimal control problems (see \autoref{sec:connection}) and to convert the backward stochastic LQ optimal
control problem into an equivalent one for which the limiting procedure applies (see \autoref{sec:construction}).
The results obtained in the paper provide insight into some related topics, especially into the study of zero-sum
stochastic differential games (as mentioned in the introduction).
We hope to report some relevant results along this line in our future publications.

\end{document}